\newcommand{\projectivespace}[0]{\mathbb{P}^N}
\newcommand{\grassmannian}[0]{\mathbb{G}}
\newcommand{\degree}[0]{d_X}
\newcommand{\chow}[0]{\mathrm{Ch}_X}
\newcommand{\hurwitz}[0]{\mathrm{Hu}_X}
\newcommand{\torus}[0]{\mathbb{H}}
\newcommand{\torustoric}[0]{\mathbb{T}}
\newcommand{\euclidspacen}[0]{\mathbb{R}^n}
\newcommand{\euclidspaceN}[0]{\mathbb{R}^{N+1}}
\newcommand{\chowpolytope}[0]{\mathcal{W}(\mathrm{Ch}_X)}
\newcommand{\hurwitzpolytope}[0]{\mathcal{W}(\mathrm{Hu}_X)}
\newcommand{\volume}[0]{\mathrm{Vol}_\mathbb{Z}}
\newcommand{\secondarypolytope}[0]{\mathrm{SecPoly}}
\newcommand{\chowdegree}[0]{\mathrm{deg}(\mathrm{Ch}_X)}
\newcommand{\hurwitzdegree}[0]{\mathrm{deg}(\mathrm{Hu}_X)}
\newcommand{\coisotropic}[0]{\mathrm{CH}}
\newcommand{\ddbar}[0]{\partial\bar{\partial}}
\newcommand{\kahlerpotentialspace}[0]{\mathcal{P}(X,\Omega)}
\newcommand{\fzero}[0]{\mathcal{F}^0_{\omega_0}}
\newcommand{\kenergy}[0]{\mathcal{M}_{\omega_0}}
\newcommand{\scalar}[0]{\mathrm{Scal}}
\newcommand{\averagescalar}[0]{\overline{S}}
\newcommand{\fubinistudy}[0]{\omega_{\mathrm{FS}}}
\newcommand{\glg}[0]{\mathrm{GL}(N+1,\mathbb{C})}
\newcommand{\latticecharacter}[0]{M_{\mathbb{Z}}}
\newcommand{\latticecharacterreal}[0]{M_{\mathbb{R}}}
\newcommand{\latticecharacterdual}[0]{N_{\mathbb{Z}}}
\newcommand{\paulfutaki}[0]{\mathrm{FP}}
\newcommand{\nakenergy}[0]{\mathcal{M}^{\mathrm{NA}}}
\newcommand{\testconfigurationspace}[0]{\mathcal{X}_\lambda}
\newcommand{\testconfigurationbundle}[0]{\mathcal{L}_\lambda}
\newcommand{\ctimes}[0]{\mathbb{C}^\times}
\newcommand{\polytopetc}[0]{\bar{Q}}
\newcommand{\toricdf}[0]{\mathcal{L}}
\newcommand{\chowfutaki}[0]{\mu_{\mathrm{Ch}}}
\newcommand{\toricaubin}[0]{\mathcal{I}}
\newcommand{\namaenergy}[0]{\mathcal{E}^{\mathrm{NA}}}
\newcommand{\df}[0]{\mathrm{DF}}
\newcommand{\pointheight}[0]{\widetilde{\omega}}
\newcommand{\Afunction}[0]{\mathbb{R}^A}
\newcommand{\functionlambda}[0]{g_\lambda}
\newcommand{\triangulationlambda}[0]{T_\lambda}
\newcommand{\conet}[0]{\mathrm{C}(T)}
\newcommand{\normalcone}[0]{\mathrm{NC}}
\newcommand{\interior}[0]{\mathrm{Int}}
\newtheorem{theorem}{Theorem}[section]
\newtheorem{corollary}[theorem]{Corollary}
\newtheorem{proposition}[theorem]{Proposition}
\theoremstyle{definition}
\newtheorem{definition}[theorem]{Definition}
\newtheorem{remark}[theorem]{Remark}
\begin{document}

\title{
	Weight polytopes and energy functionals of toric varieties
	}
\author{Yuji Sano}
\address{
Department of Applied Mathematics
    Fukuoka University
    8-19-1 Nanakuma, Jonan-ku, Fukuoka 814-0180, JAPAN
}

\email{
sanoyuji@fukuoka-u.ac.jp
}
\date{}

\thanks{
The author is supported by JSPS KAKENHI Grant Number 22K03325 and by research funds from Fukuoka University (Grant Number 225001-000).}

\begin{abstract}
We prove that the weight polytope of the Hurwitz form of a polarized smooth toric variety coincides with the convex hull of the characteristic vectors introduced in \cite{os22} with respect to all regular triangulations of the momentum polytope.
Our proof relies on the combination of the two slope formulas of $K$-energy \cite{paul12, bhj2019} in the toric setting.

\end{abstract}

\maketitle

\section{Introduction} \label{sec:introduction}

Let $(X,L)$ be an $n$-dimensional smooth polarized variety with very ample line bundle $L$.
The Kodaira embedding theorem implies the embedding 
\begin{equation}\label{eq:embedding}
	\iota:
	X \hookrightarrow \projectivespace
	\simeq \mathbb{P}(H^0(X,L)^*)
\end{equation}
with respect to a basis of $H^0(X, L)$.
We denote the image $\iota(X)$ by $X$ if it does not create any confusion.
Through this note, we assume that $X$ is irreducible and linearly normal and the degree $\degree$ of $X$ is greater than or equal to two.

Let us recall the Hurwitz form of $X$ introduced in \cite{sturmfels17}.
Let $\grassmannian(k, \projectivespace)$ be the Grassmannian of all $k$-dimensional linear subspaces in the $N$-dimensional projective space $\projectivespace$.
For an integer $N-n-1\le k \le N-1$, the subvariety of $\grassmannian(k,\projectivespace)$ defined by the Zariski closure of
\begin{equation*}\label{eq:coisotropic}
	\coisotropic_k(X):=
	\{
		L \in \grassmannian(k,\projectivespace)
		\mid\,
		L 
		\mbox{ intersects non-transversally }
		X
	\}
\end{equation*}
is called \textit{the $($$k$-th$)$ higher associated hypersurface} in \cite{gkz94} (if it has codimension one) or \textit{the coisotropic variety} in \cite{kohn21}.
If $k=N-n-1$ then $\coisotropic_{N-n-1}(X)$ is a hypersurface of $\grassmannian(N-n-1,\projectivespace)$ and its defining polynomial $\chow$ is called \textit{the Chow form} of $X$.
If $k=N-n$ then $\coisotropic_{N-n} (X) $ is a hypersurface of $\grassmannian(N-n, \projectivespace)$ and its defining polynomial $\hurwitz$ is called \textit{the Hurwitz form} of $X$ in \cite{sturmfels17}.
Each polynomial is an irreducible element in the coordinate ring of the corresponding Grassmannian.
\begin{remark}\label{rem:huriwitzhyperdiscriminant}
	By the Cayley trick, the Hurwitz form can be written as the discriminant. 
	More precisely, the Hurwitz form is equal to \textit{the hyperdiscriminant}, i.e., the discriminant of the Segre embedding
$$
	X\times \mathbb{P}^{n-1} 
	\hookrightarrow
	\mathbb{P}^{n(N+1)-1}
$$
	in the Pl\"ucker coordinates.
	See \cite{sturmfels17, kohn21} for the proof.
\end{remark}

Take a maximal torus $\torus$ in $\glg$ and consider the natural action of $\torus$ on $\projectivespace$.
This action is extended to the action on the coordinate ring of $\grassmannian(k,\projectivespace)$ in a natural way.
We call the weight polytopes of the Chow form and the Hurwitz form of $X=\iota(X)$ with respect to the $\torus$-action \textit{the Chow polytope} $\chowpolytope$ and \textit{the Hurwitz polytope} $\hurwitzpolytope$ respectively.
By definition, both $\chowpolytope$ and $\hurwitzpolytope$ are integral polytopes in $\euclidspaceN$.

Now, we consider the case where $X$ is a toric variety, i.e., an algebraic normal variety with an effective action of $\torustoric:=(\mathbb{C}^\times)^n$.
By the general theory of toric geometry, the polarization by a given $\torustoric$-equivariant very ample line bundle $L$ determines the momentum polytope $Q$, which is an integral Delzant polytope in $\euclidspacen$. 
We denote the point configuration consisting of the lattice points contained in $Q$ and its boundary $\partial Q$ by
$$
	A= \{\omega_0,\ldots, \omega_N\}.
$$
Since each point in $A$ corresponds to an element of $H^0(X,L)$, the cardinality of $A$ is equal to $N+1$.
We take the standard torus $\torus$ in $\glg$ so that $$
	\rho(\torustoric)\subset 
	\torus=
	\bigg\{
		\begin{pmatrix}
			t_0 &
		\\
			& \ddots &
		\\
			&& t_N
		\end{pmatrix}
		\bigg|\,
		t_i \in \ctimes
	\bigg\}
	\subset \glg
$$
where $\rho:\torustoric\to \glg$ is a rational representation.
We denote $\rho(\torustoric)$ by $\torustoric$ if it does not create any confusion.

Let us recall the characteristic vectors associated with the weight polytopes of $X$.
Let $T$ be a triangulation of the pair $(Q,A)$, i.e., $T$ is a triangulation of $Q$ and any vertex of any simplex in $T$ is contained in $A$.
For $0\le k \le n$, we denote by $\Sigma_T(k)$ the set of $k$-dimensional simplices contained in $T$.
For each lattice point $\omega_i\in A$, we define 
$$
	\eta_{T,n} (\omega_i):= \sum_{\omega_i\prec \sigma \in \Sigma_T(n)} \volume(\sigma).
$$
In the above, $\sigma$ runs through the set of the simplices in $\Sigma_T(n)$ containing $\omega_i$.
Note that the volume $\volume(\sigma)$ is normalized so that the volume of the standard simplex is equal to one in the affine space of minimal dimension where $\sigma$ is included.
We define \textit{the GKZ vector} with respect to $T$ by
$$
	\eta_T:= (\eta_{T,n}(\omega_0),\ldots,\eta_{T,n}(\omega_{N})).
$$
We call that a subdivision $T$ (not necessarily triangulation) is \textit{regular} when there exists a convex piecewise-linear function $g$ on $Q$ such that the vertical projection of the graph of $g$ to the domain $Q$ induces a subdivision $T$ of $Q$.
We call the convex hull of the GKZ vectors $\eta_T$ of all regular triangulations \textit{the secondary polytope}.
We denote it by $\secondarypolytope(X)$.
The following result is fundamental to the characterization of the Chow polytope of a toric variety.
\begin{theorem}[\cite{ksz92},\cite{gkz94}]\label{thm:chowpolytope}
	The Chow polytope $\chowpolytope$ of a polarized toric variety $(X,L)$ coincides with the secondary polytope $\secondarypolytope(X)$.
\end{theorem}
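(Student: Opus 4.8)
The plan is to compute the vertices of the weight polytope $\chowpolytope$ directly, by analyzing the behaviour of the Chow form under one-parameter subgroups of $\torus$. Since a weight polytope is the convex hull of the exponents of the monomials occurring in $\chow$, each of its vertices is the unique weight minimizing some generic linear functional, and such functionals are exactly the generic cocharacters $\lambda\in\latticecharacterdual$. For a generic $\lambda$ the induced height function on the configuration $A$ is generic, so its lower envelope induces a regular triangulation $T_\lambda$ of $(Q,A)$; conversely every regular triangulation arises in this way. Thus it suffices to show that the vertex of $\chowpolytope$ selected by $\lambda$ equals the GKZ vector $\eta_{T_\lambda}$, and that it depends on $\lambda$ only through $T_\lambda$. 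Granting this, the vertex sets of $\chowpolytope$ and of $\secondarypolytope(X)$ coincide (both being $\{\eta_T : T \text{ regular}\}$), and since $\secondarypolytope(X)$ is by definition their convex hull, the two polytopes are equal.

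The heart of the argument is a degeneration computation. First I would recall that $\lambda$ selects the initial form: as $t\to 0$ the point $\lambda(t)\cdot[\chow]$ converges to $[\mathrm{in}_\lambda\chow]$, the sum of the monomials of minimal $\lambda$-weight. On the other hand $\lambda(t)\cdot[\chow]=[\mathrm{Ch}_{\lambda(t)X}]$, and because the Chow point of a cycle varies continuously with the cycle, this converges to the Chow form of the flat limit $\lim_{t\to 0}\lambda(t)\cdot X$. The standard theory of toric degenerations identifies this flat limit with the cycle $\sum_{\sigma\in\Sigma_{T_\lambda}(n)}\volume(\sigma)\,[X_\sigma]$, where $X_\sigma$ is the coordinate linear subspace spanned by the basis vectors indexed by the vertices of $\sigma$ and the multiplicity is the normalized volume $\volume(\sigma)$. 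Using the multiplicativity of the Chow form on cycles, I obtain, up to a nonzero scalar,
$$
	\mathrm{in}_\lambda\chow = \prod_{\sigma\in\Sigma_{T_\lambda}(n)} \mathrm{Ch}_{X_\sigma}^{\,\volume(\sigma)}.
$$

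It remains to read off the weight. Each $X_\sigma$ is an $n$-plane, so its Chow form is a single Plücker coordinate, namely the one indexed by the vertices of $\sigma$, which under $\torus$ carries weight $\sum_{\omega_i\prec\sigma} e_i$. Hence $\mathrm{in}_\lambda\chow$ is, up to scalar, a single monomial — confirming that $\lambda$ selects a vertex — and its weight is
$$
	\sum_{\sigma\in\Sigma_{T_\lambda}(n)}\volume(\sigma)\sum_{\omega_i\prec\sigma} e_i
	= \sum_{i=0}^{N}\Big(\sum_{\omega_i\prec\sigma\in\Sigma_{T_\lambda}(n)}\volume(\sigma)\Big)e_i
	= \sum_{i=0}^{N}\eta_{T_\lambda,n}(\omega_i)\,e_i = \eta_{T_\lambda}.
$$
Since $\mathrm{in}_\lambda\chow$ depends on $\lambda$ only through $T_\lambda$, the normal fan of $\chowpolytope$ is the secondary fan, which completes the identification. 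The main obstacle, and the step I would treat most carefully, is the degeneration input: verifying that the flat limit of a very ample toric variety under a generic cocharacter is exactly the volume-weighted sum of coordinate subspaces associated with $T_\lambda$, together with the continuity and multiplicativity of the assignment from a cycle to its Chow form. These facts are classical but delicate — one must control the non-reducedness of the limit scheme and keep correct track of the multiplicities — and everything else is formal once they are in place.
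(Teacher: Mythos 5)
Your argument is correct, but it is not the route the paper takes: what you have written is essentially the original Kapranov--Sturmfels--Zelevinsky proof, via initial forms and toric degenerations, whereas the paper deliberately re-derives the theorem through K\"ahler-geometric energy functionals. Concretely, you compute the vertex of $\chowpolytope$ selected by a generic cocharacter $\lambda$ by identifying $\mathrm{in}_\lambda\chow$ with the Chow form of the flat limit $\sum_\sigma \volume(\sigma)[X_\sigma]$ and reading off the weight $\eta_{T_\lambda}$ from the product of Pl\"ucker coordinates; the delicate input is exactly the one you flag, namely the multiplicity bookkeeping in the flat limit and the continuity and multiplicativity of the Chow form on cycles. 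The paper instead never touches the degeneration: it computes only the support function $\min\{\langle x,\lambda\rangle : x\in\chowpolytope\}$, using the slope formula for the Aubin functional (Theorem \ref{thm:chownorm} and Corollary \ref{cor:chowasymptotic}) to identify this minimum with the non-Archimedean Monge--Amp\`ere energy $(n+1)!\int_Q(\functionlambda-c)\,dx$ (Proposition \ref{prop:chowweight}), and then the elementary integral identity $(\eta_T,g)=(n+1)!\toricaubin(g)$ of Proposition \ref{prop:chowweightandtoricaubin} to convert this into $\langle\eta_{T_\lambda},\lambda\rangle$; a normal-fan/cone argument then matches vertices. Your approach yields strictly more refined information (the initial monomial itself and the identification of the normal fan of $\chowpolytope$ with the secondary fan), while the paper's approach trades that for robustness: the same energy-functional template, with the Aubin functional replaced by $K$-energy via Paul's formula and the Boucksom--Hisamoto--Jonsson slope theorem, is what proves the new Theorem \ref{thm:main} for the Hurwitz polytope, where the degeneration computation you rely on is not available in the same explicit form. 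Both proofs are sound; yours would need the classical degeneration facts cited carefully (GKZ Chapter 8, or KSZ), which you correctly identify as the crux.
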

Ogusu and the author \cite{os22} introduce the characteristic vector for the Hurwitz polytope as an analogue of the GKZ vector.
For a simplex $\sigma$ in $\Sigma_T(n-1)$, we call it \textit{massive} if and only if $\sigma$ is contained in some facet of $Q$.
For each $\omega_i$, we define 
$$
	\eta_{T,n-1} (\omega_i):= \sum_{\omega_i\prec \sigma \in \Sigma_{T}(n-1)} \volume(\sigma).
$$
In the above, $\sigma$ runs through the set of the massive simplices in $\Sigma_T(n-1)$ containing $\omega_i$.
\begin{definition}[\cite{os22}]
For a triangulation of $(Q,A)$, the vector
$$
	\xi_T:= (\xi_T(\omega_0),\ldots,\xi_T(\omega_N))
$$
where
$$
	\xi_T(\omega_i):= n \eta_{T,n}(\omega_i)
	-
	\eta_{T,n-1}(\omega_i)
$$	
is called \textit{the Hurwitz vector} with respect to $T$.
\end{definition}
The main theorem of this note is as follows.
\begin{theorem}\label{thm:main}
	The Hurwitz polytope $\hurwitzpolytope$ of a smooth polarized toric variety $(X,L)$ coincides with the convex hull of the Hurwitz vectors $\xi_T$ with respect to all regular triangulations $T$ of $(Q,A)$.
\end{theorem}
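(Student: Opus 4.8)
The plan is to establish the equality of these two integral polytopes by proving that their support functions agree on every cocharacter of $\torus$. A one-parameter subgroup $\lambda=(\lambda_0,\dots,\lambda_N)$ in the cocharacter lattice $\latticecharacterdual$ may be read as a height function $\omega_i\mapsto\lambda_i$ on the configuration $A$; the lower faces of the lift $\{(\omega_i,\lambda_i)\}$ determine a convex piecewise-linear function $\functionlambda$ on $Q$ and a regular subdivision $\triangulationlambda$ of $(Q,A)$, which is a triangulation $T$ exactly when $\lambda$ lies in the interior of a maximal cone $C_T$ of the secondary fan. Because the secondary fan is complete and a polytope is determined by its support function, it suffices to compute, for each regular triangulation $T$ and each $\lambda\in C_T$, the value $\max_{v\in\hurwitzpolytope}\langle v,\lambda\rangle$ and to check that it equals $\langle\xi_T,\lambda\rangle$. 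The bridge between the two sides is a single analytic quantity computed in two ways: the asymptotic slope $e(\lambda)$ of the Mabuchi $K$-energy along the toric degeneration generated by $\lambda$.

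First I would record the non-archimedean slope formula of \cite{bhj2019}. In the toric setting the non-archimedean $K$-energy $\nakenergy$ of the test configuration attached to $\functionlambda$ decomposes into its entropy and energy parts, which are Donaldson's boundary and interior integrals,
\[
e(\lambda)=\int_{\partial Q}\functionlambda\,d\sigma-\averagescalar\int_{Q}\functionlambda\,dx,
\qquad
\averagescalar=\frac{\int_{\partial Q}d\sigma}{\int_{Q}dx},
\]
where $d\sigma$ is the lattice surface measure and $dx$ is Lebesgue measure. For $\lambda\in C_T$ the function $\functionlambda$ is affine on each simplex of $T$, and a simplexwise integration gives $\int_\sigma\functionlambda\,dx=\volume(\sigma)\,(n+1)!^{-1}\sum_{\omega_i\prec\sigma}\lambda_i$ over an $n$-simplex and $\int_\tau\functionlambda\,d\sigma=\volume(\tau)\,(n!)^{-1}\sum_{\omega_i\prec\tau}\lambda_i$ over a massive $(n-1)$-simplex. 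Summing over $T$ yields
\[
\int_{Q}\functionlambda\,dx=\frac{1}{(n+1)!}\,\langle\eta_T,\lambda\rangle,
\qquad
\int_{\partial Q}\functionlambda\,d\sigma=\frac{1}{n!}\,\langle\eta_{T,n-1},\lambda\rangle,
\]
where $\eta_{T,n-1}:=(\eta_{T,n-1}(\omega_0),\dots,\eta_{T,n-1}(\omega_N))$. It is precisely here that the \emph{massive} simplices enter, matching their role in the definition of $\xi_T$: the boundary integral reproduces the term $\eta_{T,n-1}$ and the interior integral the term $\eta_{T,n}$ that together build the Hurwitz vector.

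Next I would invoke Paul's slope formula \cite{paul12}, which computes the same slope from the weights of the two defining forms,
\[
e(\lambda)=c_{1}\,\max_{v\in\hurwitzpolytope}\langle v,\lambda\rangle-c_{2}\,\max_{v\in\chowpolytope}\langle v,\lambda\rangle,
\]
with $c_1,c_2$ explicit positive constants built from $\degree$, $\hurwitzdegree$ and $\chowdegree$. By Theorem \ref{thm:chowpolytope} the Chow weight is the secondary support function, so $\max_{v\in\chowpolytope}\langle v,\lambda\rangle=\langle\eta_T,\lambda\rangle$ for $\lambda\in C_T$. Equating the two expressions for $e(\lambda)$ and substituting the combinatorial identities above shows that, on each maximal cone $C_T$, the Hurwitz support function is the linear functional $\langle w_T,\lambda\rangle$ for an explicit rational combination $w_T=a\,\eta_T+b\,\eta_{T,n-1}$, whose coefficients $a,b$ are determined by $c_1,c_2,\averagescalar$ and the factorials $n!,(n+1)!$. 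The theorem then reduces to the assertion $w_T=\xi_T$, i.e. (after fixing a coherent choice of lower-hull and maximal-weight conventions so that all signs agree) to the two scalar identities $a=n$ and $b=-1$.

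I expect this constant matching to be the main obstacle. It is the one place where the interior weight must outweigh the boundary weight by exactly the integer factor $n$ appearing in $\xi_T(\omega_i)=n\,\eta_{T,n}(\omega_i)-\eta_{T,n-1}(\omega_i)$, and verifying it requires the explicit degree formula for the hyperdiscriminant $\hurwitzdegree$ of \cite{sturmfels17} together with the toric expression $\averagescalar=\int_{\partial Q}d\sigma/\int_Q dx$ to interlock exactly; the smoothness of $X$ enters precisely here, guaranteeing the Delzant property of $Q$ and the applicability of the formulas of \cite{paul12} and \cite{bhj2019}. Once $w_T=\xi_T$ is known for every regular triangulation $T$, the Hurwitz support function agrees with $\lambda\mapsto\langle\xi_T,\lambda\rangle$ on each $C_T$, hence with $\lambda\mapsto\max_T\langle\xi_T,\lambda\rangle$ on all of $\latticecharacterdual\otimes\mathbb{R}$ since the secondary fan is complete; as this is the support function of $\conv\{\xi_T:T\text{ regular}\}$, we obtain $\hurwitzpolytope=\conv\{\xi_T\}$. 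It remains only to treat the degenerate direction $\lambda=(1,\dots,1)$, along which both polytopes lie in a common hyperplane; there one computes $\sum_i\xi_T(\omega_i)=n(n+1)\volume(Q)-n\volume(\partial Q)$ independently of $T$, and a direct normalization check identifies this with the total degree $\hurwitzdegree$, completing the argument.
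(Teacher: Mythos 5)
Your proposal is correct and follows essentially the same route as the paper: it combines Paul's slope formula for the $K$-energy with the Boucksom--Hisamoto--Jonsson non-Archimedean slope, rewrites Donaldson's toric functional $\toricdf(\functionlambda)$ via the simplexwise integration identities $(\eta_T,g)=(n+1)!\int_Q g\,dx$ and $(\eta_{T,n-1},g)=n!\int_{\partial Q}g\,d\nu$, invokes Theorem \ref{thm:chowpolytope} for the Chow term, and concludes by comparing support functions over the maximal cones of the secondary fan (the paper phrases this last step via normal cones of vertices, which is equivalent). The only quibble is the final normalization remark: $\sum_i\xi_T(\omega_i)=n(n+1)\volume(Q)-n\volume(\partial Q)$ equals $n\,\hurwitzdegree$ rather than $\hurwitzdegree$, but this check is not needed once the support functions agree on the dense union of open maximal cones.
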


In \cite{os22}, Ogusu and the author attempted to compute the Hurwitz polytope by applying the Gelfand-Kapranov-Zelevinsky (GKZ) theory \cite{gkz94} to the hyperdiscriminant polytope.
Then they achieved a partial result of Theorem \ref{thm:main} in dimension two.
In the proof of Theorem \ref{thm:main}, we employ the slope formulas of the energy functionals in K\"ahler geometry.

The organization of this note is as follows.
In Section \ref{sec:kenergy}, we recall the slope formulas of $K$-energy functionals in K\"ahler geometry and see their coincidence.
In Section \ref{sec:proof}, we give proofs to Theorem \ref{thm:chowpolytope} and Theorem \ref{thm:main}. 
First, we describe the functional $\toricdf$ introduced in \cite{donaldson02} in terms of the vertices of the Hurwitz polytope and the Chow polytope (Proposition \ref{prop:hurwitzweightandtoricdf}).
We also see a similar result on the Aubin functional (Proposition \ref{prop:chowweightandtoricaubin}).
Second, we recover Theorem \ref{thm:chowpolytope} by using Proposition \ref{prop:chowweightandtoricaubin}.
Third, we prove Theorem \ref{thm:main} by usign Proposition \ref{prop:hurwitzweightandtoricdf} in the same way.
The proofs provided here would be an approach from K\"ahler geometry to the study of the weight polytopes of the coisotropic hypersurfaces.

\section{Slope formulas of energy functionals}\label{sec:kenergy}
In this section, we recall some results in K\"ahler geometry needed for the proofs of Theorem \ref{thm:chowpolytope} and Theorem \ref{thm:main}.
Through this section, let $X=\iota(X)$ be an embedded smooth polarized variety by (\ref{eq:embedding}).
Let $\Omega:=c_1(L)$ be the K\"ahler class of $X$.

\subsection{Energy functionals}
Take a reference K\"ahler form $\omega_0\in \Omega$.
We denote the volume of $X$ with respect to the volume form $\omega_0^n$ by $V$.
Let 
$$
	\kahlerpotentialspace:
	=
	\{
		\varphi \in C^\infty(X)_\mathbb{R}
		\mid\,
		\omega_\varphi := 
		\omega_0+(\sqrt{-1}/2\pi) \varphi >0
	\}
$$
be the space of K\"ahler potentials of K\"ahler forms in $\Omega$.
For any K\"ahler form $\omega_{\varphi_1}$, take a path $\varphi_t$ in $\kahlerpotentialspace$ connecting $\omega_0$ to $\omega_{\varphi_1}$.

We define the following functionals on $\kahlerpotentialspace$ by
\begin{eqnarray*}
		\fzero(\varphi)
	&=&
		\frac{1}{V}
		\int^1_0dt
		\int_X 
			\dot{\varphi_t} 
			\omega_{\varphi_t}^n,
	\\
		\kenergy(\varphi)
	&=&
		-
		\frac{1}{V}
		\int^1_0dt
		\int_X
			\dot{\varphi_t}
			(\scalar(\omega_t)-\averagescalar)
			\omega_t^n.
\end{eqnarray*}
In the above, $\scalar(\omega)$ denotes the scalar curvature of $\omega$ and $\averagescalar$ denotes the average of the scalar curvature
$$
	\averagescalar:= \frac{1}{V}\int_X \scalar(\omega)\omega^n
$$
that is independent of the choice of $\omega$.
The functional $\fzero$ is often known as \textit{the Aubin functional} and the functional $\kenergy$ is called \textit{$K$-energy} introduced in \cite{mabuchi86}.
Note that they are independent of the choice of the path $\varphi_t$ and that
$$
	\fzero(\varphi+C)=\fzero(\varphi)+C,
	\quad
	\kenergy(\varphi+C)=\kenergy(\varphi)
$$ 
for any constant $C$.

\subsection{Paul's formula}
Let us recall the formula of $K$-energy on the space of the pull-backed Fubini-Study metrics given by Paul \cite{paul12}.

Let the reference form 
$
	\omega_0=\iota^*\fubinistudy 
	\in \Omega
$
be the pull-backed Fubini-Study form on $\projectivespace$ by the embedding (\ref{eq:embedding}) with respect to a fixed basis of $H^0(X,L)$.
For $\lambda\in\glg$, let
$$
	\lambda^*\omega_0 =
	\omega_0 + (\sqrt{-1}/2\pi)
	\ddbar
	\varphi_\lambda.
$$
The potential $\varphi_\lambda$ is unique up to constant.

For the embedded smooth variety $X=\iota (X)$, we denote the Chow form and the Hurwitz form by $\chow$ and $\hurwitz$ respectively.
We denote the degree of $\chow$ and $\hurwitz$ by $\chowdegree$ and $\hurwitzdegree$ respectively.
Then the followings are known:
\begin{eqnarray}
	\label{eq:chowdegree}
		\chowdegree
	&=&
		\degree,
	\\
	\label{eq:hurwitzdegree}
		\hurwitzdegree
	&=&
		(n+1)\degree - \frac{\degree \averagescalar}{n}.	
\end{eqnarray}
We refer to Proposition 5.7 \cite{paul12} for the proof of (\ref{eq:hurwitzdegree}).
Remark that both $\chowdegree$ and $\hurwitzdegree$ in this note are written in the Pl\"ucker coordinates, whereas the ones are written in the Stiefel coordinates in \cite{paul12} (see  \cite{paul21}).

We denote the action of $\glg$ on the coordinate ring of $\grassmannian(k,\projectivespace)$ by
$
	\lambda\cdot F$
where $\lambda\in\glg$ and $F$ is an element of the coordinate ring of $\grassmannian(k,\projectivespace)$.
  
\begin{theorem}[Theorem A \cite{paul12}]\label{thm:paul}
	For any $\lambda\in\glg$,
	\begin{eqnarray}
		\nonumber
			 (n+1)V^2
			\kenergy(\varphi_\lambda)
		&=&
			(n+1)
			\chowdegree 
			\log
			\frac{\|\lambda\cdot \hurwitz\|^2}
				{\|\hurwitz\|^2}
		\\
		\label{eq:paul}
		&&
			\qquad\qquad
			-
			n
			\hurwitzdegree
			\log
			\frac{\|\lambda\cdot \chow\|^2}
				{\|\chow\|^2}.		
	\end{eqnarray}
\end{theorem}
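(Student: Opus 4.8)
The plan is to prove (\ref{eq:paul}) by a first-variation argument along paths in $\glg$. Fix honest Fubini--Study potentials $\varphi_\lambda$ (via the explicit comparison of $\|\lambda\cdot Z\|^2$ with $\|Z\|^2$ on $\projectivespace$), so that both sides of (\ref{eq:paul}) are genuine smooth functions of $\lambda$. At $\lambda=\mathrm{id}$ both sides vanish, since $\kenergy(\varphi_{\mathrm{id}})=0$ and the two logarithmic ratios are $\log 1 = 0$. As $\glg$ is connected, it then suffices to show that along an arbitrary smooth path $\lambda_t$ the $t$-derivatives of the two sides agree. The additive-constant ambiguity of the $\varphi_\lambda$ is harmless for $\kenergy$, and its effect on the right-hand side will cancel: the derivative in the scalar direction $\lambda_t=e^{t}\cdot\mathrm{id}$ will turn out to vanish on both sides.

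First I would differentiate the left-hand side. Directly from the definition of $\kenergy$,
\[
	\frac{d}{dt}\Big[(n+1)V^2\kenergy(\varphi_{\lambda_t})\Big]
	=
	-(n+1)V\int_X \dot\varphi_{\lambda_t}\big(\scalar(\omega_{\lambda_t})-\averagescalar\big)\,\omega_{\lambda_t}^n ,
\]
where $\omega_{\lambda_t}=\lambda_t^*\omega_0$. For the two norm ratios I would use the variational description of the Fubini--Study norm of a defining form of a coisotropic hypersurface $\coisotropic_k(X)$: its logarithmic derivative is the integral over $X$ of the infinitesimal-action potential $\dot\varphi_{\lambda_t}$ against a measure read off from $\coisotropic_k(X)$. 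For $\chow$ (the case $k=N-n-1$) this measure is the Monge--Amp\`ere measure, so with a universal constant $c_1$,
\[
	\frac{d}{dt}\log\frac{\|\lambda_t\cdot\chow\|^2}{\|\chow\|^2}=c_1\int_X\dot\varphi_{\lambda_t}\,\omega_{\lambda_t}^n .
\]
For $\hurwitz$ (the case $k=N-n$) I would pass to the Cayley-trick description of Remark \ref{rem:huriwitzhyperdiscriminant}, by which $\hurwitz$ is the discriminant of the Segre embedding $X\times\mathbb{P}^{n-1}\hookrightarrow\mathbb{P}^{n(N+1)-1}$; the variation of a discriminant norm couples to the relative canonical bundle, so a curvature term appears:
\[
	\frac{d}{dt}\log\frac{\|\lambda_t\cdot\hurwitz\|^2}{\|\hurwitz\|^2}=c_2\int_X\dot\varphi_{\lambda_t}\,\omega_{\lambda_t}^n+c_3\int_X\dot\varphi_{\lambda_t}\,\scalar(\omega_{\lambda_t})\,\omega_{\lambda_t}^n ,
\]
with universal constants $c_2,c_3$.

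Then I would assemble. Substituting the two displays into the derivative of the right-hand side of (\ref{eq:paul}) and using $\chowdegree=\degree$ from (\ref{eq:chowdegree}) produces the scalar-curvature term $(n+1)\degree\,c_3\int_X\dot\varphi\,\scalar(\omega)\,\omega^n$ together with the remaining term $\big[(n+1)\degree\,c_2-n\,\hurwitzdegree\,c_1\big]\int_X\dot\varphi\,\omega^n$. With the normalization $V=\degree$ (forced by $[\fubinistudy]=c_1(\mathcal{O}(1))$), matching the $\scalar$-term to the left gives $c_3=-1$, while matching the $\omega^n$-term for every $X$ — feeding in (\ref{eq:hurwitzdegree}) — forces the internal relations $c_2=n\,c_1$ and $c_1=n+1$; the identity $(n+1)\degree-\hurwitzdegree=\degree\averagescalar/n$ then collapses its coefficient to exactly $(n+1)\degree\,\averagescalar$. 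Hence the right-hand derivative equals $-(n+1)V\int_X\dot\varphi\,(\scalar-\averagescalar)\,\omega^n$, matching the left, and integrating from $\lambda=\mathrm{id}$ finishes the proof. As a consistency check, in the scalar direction $\dot\varphi$ is constant and both derivatives vanish, reflecting that the seemingly $\glg$-dependent right-hand side descends to a function of the projective metric. The constants $c_1,c_2,c_3$ can moreover be pinned down independently by testing on diagonal one-parameter subgroups, where all three log-norms and the energy are explicitly computable.

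The main obstacle is the second variational identity, that is, extracting the scalar-curvature integral from the Fubini--Study norm of the hyperdiscriminant; this is the genuinely analytic heart of the argument and the place where Remark \ref{rem:huriwitzhyperdiscriminant} is indispensable. Concretely, differentiating $\log\|\lambda_t\cdot\hurwitz\|^2$ expresses the derivative as an integral of $\dot\varphi_{\lambda_t}$ over the conormal (dual) variety of $X\times\mathbb{P}^{n-1}$, and converting this into a bulk integral over $X$ weighted by $\scalar(\omega)$ requires adjunction together with a Bergman-kernel (equivalently, current-of-integration) computation on the Segre product; this is the core of Paul's analysis in \cite{paul12}. By comparison, the Chow identity and the constant bookkeeping governed by (\ref{eq:chowdegree}) and (\ref{eq:hurwitzdegree}) are formal, so the write-up would concentrate on the Hurwitz variation.
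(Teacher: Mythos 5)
The first thing to note is that the paper does not prove Theorem \ref{thm:paul} at all: it is imported verbatim as Theorem A of \cite{paul12}, and the surrounding text only adjusts conventions (Pl\"ucker vs.\ Stiefel coordinates, Hurwitz form vs.\ hyperdiscriminant, $\glg$ vs.\ $\mathrm{SL}(N+1,\mathbb{C})$). So your attempt must be judged as a standalone proof, and as such it has a genuine gap at exactly the one step that is not classical. Your first variational identity (for $\chow$) is Tian--Zhang \cite{tian94,zhang96} and is fine, but your second one --- that $\frac{d}{dt}\log\left(\|\lambda_t\cdot\hurwitz\|^2/\|\hurwitz\|^2\right)$ is a bulk integral of $\dot\varphi_{\lambda_t}$ against $\left(c_2+c_3\,\scalar(\omega_{\lambda_t})\right)\omega_{\lambda_t}^n$ --- is, in integrated form and combined with the Chow identity, precisely the statement (\ref{eq:paul}) you are trying to prove. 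You assert it with unspecified constants and then defer its justification (``adjunction together with a Bergman-kernel computation \dots the core of Paul's analysis'') to \cite{paul12}. A proof that cites the theorem's own source at its crux is not a proof; everything else in the write-up is bookkeeping around this missing step.

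Second, the bookkeeping itself does not close up. You determine $c_1,c_2,c_3$ by demanding that the differentiated right-hand side of (\ref{eq:paul}) equal the differentiated left-hand side for every $X$ --- which is circular (the promised independent computation on diagonal one-parameter subgroups is never carried out) --- and the universality ansatz it rests on is false. Differentiating the paper's Theorem \ref{thm:chownorm} gives $\frac{d}{dt}\log\left(\|\lambda_t\cdot\chow\|^2/\|\chow\|^2\right)=-(n+1)\int_X\dot\varphi_{\lambda_t}\,\omega_{\lambda_t}^n$, i.e.\ $c_1=-(n+1)$, whereas your matching forces $c_1=+(n+1)$. If instead one takes (\ref{eq:paul}) and Theorem \ref{thm:chownorm} both at face value and solves for the Hurwitz variation, the coefficient of $\int_X\dot\varphi\,\omega^n$ comes out as $2\averagescalar-n(n+1)$ (using $V=\chowdegree=\degree$ and (\ref{eq:hurwitzdegree})): it depends on $X$ through $\averagescalar$, so it is not a universal constant, and then matching ``for every $X$'' pins down nothing. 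The $\averagescalar$-dependence has to be produced by the variational formula itself, which is again exactly Paul's theorem. (Your side remark that the scalar direction $\lambda_t=e^t\,\mathrm{id}$ kills both sides is also not automatic: scalars act trivially on $\projectivespace$, hence on the left side, but they rescale $\chow$ and $\hurwitz$ nontrivially, and the degrees do not cancel naively; handling this is the reason \cite{paul12} works with $\mathrm{SL}(N+1,\mathbb{C})$ and with Tian's norms.)
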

\noindent
We give four remarks on the above theorem.
First, the difference in the choice of the coordinates on the Grassmannians affects the factors of the first and second terms in the right hand of (\ref{eq:paul}) compared to (1.1) in \cite{paul12}.
Second, the formula (\ref{eq:paul}) is proved by using the hyperdiscriminant of $X$ instead of the Hurwitz form originally in \cite{paul12}.
The hyperdiscriminant plays a role in the proof.
However, we replace it with the Hurwitz form for our purpose.
This replacement does make no difference due to Remark \ref{rem:huriwitzhyperdiscriminant}.
Third, the norms that appeared in Theorem A \cite{paul12} are introduced originally in the pioneering work \cite{tian94} of Tian.
These norms are described more explicitly in \cite{paul21}.
The fact that we should keep in mind is that the norm in (\ref{eq:paul}) are conformally equivalent to the standard norms because the dimension of the spaces we consider are finite.
Fourth, the group appeared in \cite{paul12, paul21} is $\mathrm{SL}(N+1,\mathbb{C})$, whereas the one we consider is $\glg$.
This does make no difference on (\ref{eq:paul}) because the scaling of $\lambda$ does not affect both sides of (\ref{eq:paul}).

Theorem \ref{thm:paul} implies the asymptotic expansion of $K$-energy.
Let $\latticecharacter \simeq \mathbb{Z}^{N+1}$ be the rank $(N+1)$ lattice of the rational characters of $\torus$.
Let $\latticecharacterreal:=\latticecharacter\otimes_{\mathbb{Z}}\mathbb{R}\simeq \euclidspaceN$.
By definition, both $\chowpolytope$ and $\hurwitzpolytope$ are contained in $\latticecharacterreal$.
For an element $\lambda$ in the dual lattice $\latticecharacterdual$ of $\latticecharacter$, take an algebraic one parameter subgroup $\lambda(t)$ in $\torus$.
Let $l_\lambda: \latticecharacter\to\mathbb{R}$ be the integral linear functional corresponding to $\lambda$, i.e., 
$$
	l_\lambda(x)=\langle x, \lambda \rangle.
$$
The following is a corollary of Theorem \ref{thm:paul}.
\begin{theorem}[Theorem B \cite{paul12}]\label{thm:paulfutaki}
	The following asymptotic expansion holds as $|t|\to \infty$$:$
	\begin{equation}\label{eq:paulfutakiexpansion}
		\frac{(n+1)V^2}{n}
		\kenergy(\varphi_{\lambda(t)})
		=
		\paulfutaki(\lambda) \log|t|^2
		+\mathcal{O}(1)
	\end{equation}
	where
	\begin{eqnarray}
		\nonumber
				\paulfutaki(\lambda)
		&:=&
				\bigg(
					\frac{n+1}{n}
				\bigg)
				\chowdegree 
				\min\{
				\langle x, \lambda\rangle
				 \mid\,
					x\in \hurwitzpolytope
				\}	
		\\
		\label{eq:paulfutaki}
		&&
				\qquad\qquad
				-
				\hurwitzdegree
				\min\{
					\langle x, \lambda\rangle
					\mid\,
					x\in \chowpolytope
				\}.
	\end{eqnarray}
\end{theorem}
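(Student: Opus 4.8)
The plan is to deduce Theorem \ref{thm:paulfutaki} from Paul's formula (Theorem \ref{thm:paul}) by inserting the one-parameter subgroup $\lambda(t)$ into (\ref{eq:paul}) and extracting the leading term of each logarithmic norm-ratio as $|t|\to\infty$. Dividing (\ref{eq:paul}) by $n$ gives
\begin{equation*}
	\frac{(n+1)V^2}{n}\kenergy(\varphi_{\lambda(t)})
	=
	\frac{n+1}{n}\chowdegree\,\log\frac{\|\lambda(t)\cdot\hurwitz\|^2}{\|\hurwitz\|^2}
	-\hurwitzdegree\,\log\frac{\|\lambda(t)\cdot\chow\|^2}{\|\chow\|^2},
\end{equation*}
so, comparing with (\ref{eq:paulfutaki}), it suffices to prove the single asymptotic
\begin{equation*}
	\log\frac{\|\lambda(t)\cdot F\|^2}{\|F\|^2}
	=
	\min\{\langle x,\lambda\rangle\mid x\in\mathcal{W}(F)\}\,\log|t|^2+\mathcal{O}(1)
	\qquad(|t|\to\infty)
\end{equation*}
for a homogeneous $F$ in the coordinate ring, and then to substitute $F=\chow$ and $F=\hurwitz$, whose weight polytopes are the Chow polytope $\chowpolytope$ and the Hurwitz polytope $\hurwitzpolytope$ respectively, and collect the coefficients of $\log|t|^2$.

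To prove this asymptotic I would first pass from the norm in (\ref{eq:paul}) to the standard Hermitian norm: by the remark following Theorem \ref{thm:paul} the two differ by a positive constant on the finite-dimensional space in which $F$ and $\lambda(t)\cdot F$ both lie, and this constant cancels in the ratio. The standard norm is invariant under the maximal compact subgroup of $\glg$, hence under the compact torus in $\torus$, so the $\torus$-weight spaces are mutually orthogonal. Decomposing $F=\sum_x F_x$ into weight components, where $x$ runs over the $\torus$-weights of $F$ and $\mathcal{W}(F)$ is by definition the convex hull of those $x$ with $F_x\neq0$, orthogonality yields
\begin{equation*}
	\|\lambda(t)\cdot F\|^2=\sum_{x}|t|^{2\langle x,\lambda\rangle}\,\|F_x\|^2 .
\end{equation*}
Since $\lambda\in\latticecharacterdual$ is integral, $\lambda(t)$ is a genuine algebraic one-parameter subgroup and the exponents $\langle x,\lambda\rangle$ are integers; as $|t|\to\infty$ this finite sum is therefore dominated by the single extremal exponent, giving $\|\lambda(t)\cdot F\|^2=C\,|t|^{2\mu_F(\lambda)}(1+o(1))$ with $C>0$ and $\mu_F(\lambda)$ the extremal value of $\langle\,\cdot\,,\lambda\rangle$ over the weight support. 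Taking logarithms produces the displayed asymptotic with remainder $\mathcal{O}(1)$, the extremal value being a vertex value of the support function of $\mathcal{W}(F)$ in the direction $\lambda$.

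Assembling the two cases and reading off the coefficient of $\log|t|^2$ from the divided formula gives exactly $\frac{n+1}{n}\chowdegree\min_{x\in\hurwitzpolytope}\langle x,\lambda\rangle-\hurwitzdegree\min_{x\in\chowpolytope}\langle x,\lambda\rangle=\paulfutaki(\lambda)$, with all lower-order contributions absorbed into $\mathcal{O}(1)$; this is the content of Theorem \ref{thm:paulfutaki}. The main obstacle is the sign bookkeeping hidden in $\mu_F(\lambda)$: one must verify that the extremal exponent selected as $|t|\to\infty$, once the coordinate-ring weights are expressed in terms of the weights recorded in $\mathcal{W}(F)$, is the \emph{minimum} of $\langle x,\lambda\rangle$ over $\mathcal{W}(F)$ rather than the maximum, and that its coefficient matches $\log|t|^2=2\log|t|$. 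This rests on the conventions fixed when the $\torus$-action was extended to the coordinate ring of the Grassmannian and on the correspondence between the degenerating potential $\varphi_{\lambda(t)}$ of the pulled-back metric $\lambda(t)^*\omega_0$ and the action $\lambda(t)\cdot F$, which may contribute an inversion $\lambda\mapsto\lambda^{-1}$ and a reversal of weight signs. Once this correspondence is pinned down, the identification $\mu_F(\lambda)=\min_{x\in\mathcal{W}(F)}\langle x,\lambda\rangle$ is immediate and Theorem \ref{thm:paulfutaki} follows.
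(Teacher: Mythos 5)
The paper offers no proof of this statement: it is quoted as Theorem B of \cite{paul12}, with only the remark that it is a corollary of Theorem \ref{thm:paul}. Your plan---specialize (\ref{eq:paul}) to $\lambda(t)$, replace Tian's norm by the standard Hermitian norm at the cost of an $\mathcal{O}(1)$ error, decompose into $\torus$-weight components, use orthogonality, and extract the dominant exponent---is exactly the intended derivation, and those parts of your write-up are correct.

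The step you defer, however, is a genuine gap and not bookkeeping that "pinning down conventions" can absorb. Write $\lambda(t)\cdot F=\sum_x t^{\langle x,\lambda\rangle}F_x$, where by definition the weight polytope $\mathcal{W}(F)$ is the convex hull of the exponents $x$ that occur. Orthogonality gives $\|\lambda(t)\cdot F\|^2=\sum_x|t|^{2\langle x,\lambda\rangle}\|F_x\|^2$, and a finite sum of positive powers of $|t|$ is dominated by its \emph{largest} exponent as $|t|\to\infty$ and by its \emph{smallest} exponent as $|t|\to 0$. Hence your computation yields
\begin{equation*}
	\log\frac{\|\lambda(t)\cdot F\|^2}{\|F\|^2}
	=
	\max\{\langle x,\lambda\rangle\mid x\in\mathcal{W}(F)\}\,\log|t|^2+\mathcal{O}(1)
	\qquad(|t|\to\infty),
\end{equation*}
and the minimum can only appear in the regime $|t|\to 0$. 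Redefining the action (inserting $\lambda^{-1}$, transposes, etc.) does not help: any such flip negates the weight support and the coefficient simultaneously, so at $|t|\to\infty$ one can only ever obtain $+\max$ or $-\min$ over $\mathcal{W}(F)$, never $+\min$. Consequently the identification $\mu_F(\lambda)=\min_{x\in\mathcal{W}(F)}\langle x,\lambda\rangle$ that you promise to verify at $|t|\to\infty$ is false under every convention, and your argument as written cannot close.

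What your argument does prove is (\ref{eq:paulfutakiexpansion}) with the minima in (\ref{eq:paulfutaki}) as $|t|\to 0$, equivalently the maxima-version as $|t|\to\infty$ (the two are exchanged by $\lambda\mapsto-\lambda$, since $\lambda(t^{-1})=(-\lambda)(t)$). This is also the version the paper actually uses: the parallel expansion for the Aubin functional, Corollary \ref{cor:chowasymptotic}, is stated as $|t|\to 0$, and in Corollary \ref{cor:hurwitzweight} the coefficient $\paulfutaki(\lambda)$ is matched against the slope (\ref{eq:bhj}), which is taken along $t=e^{-s}$ with $s\to+\infty$, i.e. $t\to 0$; it likewise agrees with Paul's original Theorem B. So to complete your proof you should run the dominant-exponent argument at $|t|\to 0$, where the minimum genuinely dominates, and note explicitly that the ``$|t|\to\infty$'' in the statement must be read as ``$|t|\to 0$'' for the formula with minima to hold.
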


\subsection{Non-Archimedean $K$-energy}
Following \cite{bhj2017, bhj2019, hisamoto16}, we recall that the slope of $K$-energy  is equal to the  intersection number (non-Archimedean $K$-energy) on the total space of the test configuration of $(X,L)$.
For our purpose, we consider only the compactified test configuration induced by a one parameter subgroup $\lambda(t)$.

An element $\lambda\in\latticecharacterdual$ induces an algebraic one parameter subgroup $\lambda:\ctimes\to \glg$.
We denote the Zariski closure
$$
	\testconfigurationspace
	:=
	\overline{
		\{
			(\lambda(t) x, t)\mid\,
			x\in \iota(X),\, t\in \ctimes
		\}
	}
	\subseteq
	\projectivespace\times \ctimes.
$$ 
Let $\testconfigurationbundle$ be the pull back of $\mathcal{O}(1)_{\projectivespace}$.
Then the pair $(\testconfigurationspace,\testconfigurationbundle)$ constitutes \textit{a test configuration} of $(X,L)$ (\cite{donaldson02}), that is to say, the projection $\pi:\testconfigurationspace\to \ctimes$ is $\ctimes$-equivariant proper flat morsphism such that $(\pi^{-1}(t), \testconfigurationbundle\mid_{\pi^{-1}(t)}) \simeq (X,L)$.
We denote the central fiber by $((\testconfigurationspace)_0,(\testconfigurationbundle)_0)=(\pi^{-1}(0),\testconfigurationbundle\mid_{\pi^{-1}(0)})$.

In \cite{bhj2019}, Boucksom-Hisamoto-Jonsson prove that the slope of $K$-energy along the ray $\varphi_{\lambda(t)}$ is equal to \textit{the non-Archimedean $K$-energy} defined by the intersection number
\begin{equation}\label{eq:nakenergy}
	\nakenergy(\testconfigurationspace,\testconfigurationbundle):
	=
	\frac{1}{V}
	\big(
		K^{\log}_{\bar{\testconfigurationspace}/\mathbb{P}^1}
		\cdot
		\bar{\testconfigurationbundle}^n
	\big)
	+
	\frac{\averagescalar}{V(n+1)}
	\big(
		\bar{\testconfigurationbundle}^{n+1}	
	\big)
\end{equation}
on $(\bar{\testconfigurationspace},\bar{\testconfigurationbundle})$.
In above, $\bar{\pi}:(\bar{\testconfigurationspace},\bar{\testconfigurationbundle})\to \mathbb{P}^1$ denotes the compactification of $(\testconfigurationspace,\testconfigurationbundle)$ in a canonical way (see Definition 2.4 \cite{bhj2017} for the detail) and $K^{\log}_{\bar{\testconfigurationspace}/\mathbb{P}^1}$ denotes the relative logarithmic canonical divisor
\begin{eqnarray*}
			K^{\log}_{\bar{\testconfigurationspace}/\mathbb{P}^1}
	&=&
			K_{\bar{\testconfigurationspace}/\mathbb{P}^1}			-
			\bar{\pi}^*K_{\mathbb{P}^1}
	\\
	&=&
			K_{\bar{\testconfigurationspace}}
			-
			\bar{\pi}^*K_{\mathbb{P}^1}
			+
			(\testconfigurationspace)_{0,\mathrm{red}}
			-(\testconfigurationspace)_0.			
\end{eqnarray*}

\begin{theorem}[Theorem 3.6 \cite{bhj2017}]\label{thm:bhj}
	Let $(\testconfigurationspace,\testconfigurationbundle)$ be the test configuration induced by $\lambda\in\latticecharacterdual$ as before.
	Then we have
	\begin{equation}\label{eq:bhj}
		\lim_{s\to +\infty}
		\frac{\kenergy(\varphi_{\lambda(e^{-s})})}{s}
		=
		2\nakenergy(\testconfigurationspace,\testconfigurationbundle).
	\end{equation}
\end{theorem}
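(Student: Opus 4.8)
The plan is to follow the strategy of Boucksom--Hisamoto--Jonsson \cite{bhj2017, bhj2019}: split the $K$-energy into an entropy part and two pluripotential energy parts, compute the asymptotic slope of each part along the algebraic ray $s\mapsto\varphi_{\lambda(e^{-s})}$ separately, and recombine the resulting intersection numbers into the two terms of (\ref{eq:nakenergy}). Concretely, I would start from the Chen--Tian decomposition
$$
	\kenergy(\varphi)
	=
	\mathrm{Ent}(\varphi)
	-n\,E^{\mathrm{Ric}(\omega_0)}(\varphi)
	+\averagescalar\,E(\varphi),
$$
where $E(\varphi)=\tfrac{1}{(n+1)V}\sum_{j=0}^{n}\int_X\varphi\,\omega_0^{j}\wedge\omega_\varphi^{n-j}$ is the Monge--Amp\`ere energy, $E^{\mathrm{Ric}(\omega_0)}$ is its twist by the Ricci form $\mathrm{Ric}(\omega_0)\in c_1(X)$, and $\mathrm{Ent}(\varphi)=\tfrac1V\int_X\log(\omega_\varphi^n/\omega_0^n)\,\omega_\varphi^n$ is the relative entropy. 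Each summand is asymptotically linear in $s$, so it suffices to compute three separate slopes. The overall factor $2$ in (\ref{eq:bhj}) will originate from the normalization $|t|^2=e^{-2s}$ relating the ray parameter $s$ to the $\log|t|^2$-scale in which the intersection numbers of (\ref{eq:nakenergy}) are calibrated.

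Then I would dispatch the two energy terms. Along the ray the potentials $\varphi_{\lambda(e^{-s})}$ determine a semipositively curved metric on $\bar{\testconfigurationbundle}$, and the Monge--Amp\`ere energy is asymptotically linear with slope given by a Deligne-pairing expansion (the classical computation of Tian and Phong--Ross--Sturm):
$$
	\lim_{s\to+\infty}\frac{E(\varphi_{\lambda(e^{-s})})}{s}
	=\frac{2}{(n+1)V}\big(\bar{\testconfigurationbundle}^{\,n+1}\big).
$$
The twisted energy is handled identically, its slope being $\tfrac{2}{nV}\big(K^{-1}_{\bar{\testconfigurationspace}/\mathbb{P}^1}\cdot\bar{\testconfigurationbundle}^{\,n}\big)$, since $\mathrm{Ric}(\omega_0)$ represents the relative anticanonical class. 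Both reduce to convergence of mixed Monge--Amp\`ere masses, which is routine once the potentials are known to carry the correct logarithmic singularity type as $s\to+\infty$.

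The crux is the slope of the entropy $\mathrm{Ent}$, which is not a pluripotential energy and instead detects the singularities of the central fibre $(\testconfigurationspace)_0$. The goal is to identify $\lim_{s\to+\infty}s^{-1}\mathrm{Ent}(\varphi_{\lambda(e^{-s})})$ with the non-Archimedean entropy, i.e.\ the relative log-discrepancy of $(\bar{\testconfigurationspace},(\testconfigurationspace)_0)$ paired against $\bar{\testconfigurationbundle}^{\,n}$, which is the quantity supplying the correction $(\testconfigurationspace)_{0,\mathrm{red}}-(\testconfigurationspace)_0$. This step demands uniform control of $\log(\omega_{\varphi_{\lambda(e^{-s})}}^n/\omega_0^n)$ and the weak convergence of the Monge--Amp\`ere measures to their non-Archimedean limit; it is precisely the technically heaviest input of \cite{bhj2019}, resting on pluripotential estimates and the valuative description of the central fibre, and I expect it to be the main obstacle.

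Finally, I would assemble the three slopes. Using the displayed identity $K^{\log}_{\bar{\testconfigurationspace}/\mathbb{P}^1}=K_{\bar{\testconfigurationspace}/\mathbb{P}^1}+(\testconfigurationspace)_{0,\mathrm{red}}-(\testconfigurationspace)_0$, the logarithmic correction furnished by the entropy slope, together with $-n$ times the Ricci-energy slope, $\tfrac{2}{V}(K_{\bar{\testconfigurationspace}/\mathbb{P}^1}\cdot\bar{\testconfigurationbundle}^{\,n})$, combine into $\tfrac{2}{V}(K^{\log}_{\bar{\testconfigurationspace}/\mathbb{P}^1}\cdot\bar{\testconfigurationbundle}^{\,n})$, while $\averagescalar$ times the Monge--Amp\`ere slope yields $\tfrac{2\averagescalar}{(n+1)V}(\bar{\testconfigurationbundle}^{\,n+1})$. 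Comparing with (\ref{eq:nakenergy}) produces exactly $2\nakenergy(\testconfigurationspace,\testconfigurationbundle)$, which is (\ref{eq:bhj}).
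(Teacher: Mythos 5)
There is an important structural point to make first: the paper does not prove Theorem \ref{thm:bhj} at all. It is imported as a black box from Boucksom--Hisamoto--Jonsson (the slope statement is the content of \cite{bhj2019}, quoted here with the attribution to Theorem 3.6 of \cite{bhj2017}), and the only original content the paper adds is the remark explaining the factor $2$ by the discrepancy in potential normalizations between \cite{paul12} and \cite{bhj2017}. So there is no in-paper argument to compare your proposal against; it can only be measured against the cited BHJ proof. At the level of architecture your outline is faithful to it: the Chen--Tian decomposition of $\kenergy$ into entropy, Ricci-twisted energy and Monge--Amp\`ere energy, the term-by-term slope computations, and the reassembly into (\ref{eq:nakenergy}) is exactly BHJ's strategy, and your explanation of the factor $2$ (the ray parameter $s=-\log|t|$ versus the $\log|t|^2$ scale in which the potentials $\varphi_\lambda$ of \cite{paul12} are normalized) is consistent with the paper's remark.

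Two caveats, one a concrete mathematical error, one structural. First, your intermediate slope identifications are incorrect even though their sum comes out right. The form $\mathrm{Ric}(\omega_0)$ represents $c_1(-K_X)$ on $X$; it does not represent the relative anticanonical class of $\bar{\testconfigurationspace}/\mathbb{P}^1$. The non-Archimedean limit of the twisted energy pairs $\bar{\testconfigurationbundle}^{\,n}$ with the pullback of $K_{X\times\mathbb{P}^1/\mathbb{P}^1}$ under the canonical birational map to the product (computed on a model dominating both), not with $K_{\bar{\testconfigurationspace}/\mathbb{P}^1}$; correspondingly, the entropy slope is the full non-Archimedean entropy, which contains the relative canonical $K_{\bar{\testconfigurationspace}/X\times\mathbb{P}^1}$ in addition to the correction $(\testconfigurationspace)_{0,\mathrm{red}}-(\testconfigurationspace)_0$. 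These two discrepancies are supported on the central fibre and cancel upon addition, which is why your final assembly reproduces (\ref{eq:nakenergy}); but each displayed slope is false as stated. For instance, for a test configuration with reduced central fibre but nontrivial total space (deformation to the normal cone), your claimed entropy slope vanishes while the true one is a nonzero multiple of $(K_{\bar{\testconfigurationspace}/X\times\mathbb{P}^1}\cdot\bar{\testconfigurationbundle}^{\,n})$. Second, the step you yourself flag as the main obstacle --- identifying the entropy slope with the non-Archimedean entropy --- is the entire analytic content of the theorem, and you defer it wholesale to the machinery of \cite{bhj2019}. Since the paper itself treats the statement as a citation, that stance is legitimate, but then what you have written is an annotated citation rather than an independent proof, and it should be presented as such.
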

\begin{remark}
	The factor $2$ in the right hand of (\ref{eq:bhj}) does not appear in \cite{bhj2017}.
	It appears due to the difference in the normalizations of the K\"ahler potentials (equivalently, the conformal factor of a Hermitian metric on $L$) between \cite{paul12} and \cite{bhj2017}.
\end{remark}

\subsection{Toric case}
If $(X,L)$ is toric, then $\nakenergy(\testconfigurationspace,\testconfigurationbundle)$ is interpreted as the functional on convex piecewise-linear functions on $Q$.
We refer to Remark 1.2 in \cite{hisamoto16} and  Theorem 5.1 in \cite{delcroix20} for the argument of this subsection, although it would follow straightforwardly from \cite{bhj2017}.

Since $\lambda\in\latticecharacterdual$ and $\torustoric\subset \torus$, the associated one parameter subgroup $\lambda(t)$ is commutative with $\torustoric$.
Then $\torustoric\times \ctimes$ acts on $(\testconfigurationspace,\testconfigurationbundle)$ effectively where the action of the second factor of $\torustoric\times \ctimes$ comes from the action of $\lambda(t)$.
Hence, the compactification $(\bar{\testconfigurationspace},\bar{\testconfigurationbundle)}$ is an $(n+1)$-dimensional polarized toric variety.
Let $\polytopetc\subset \latticecharacterreal\otimes\mathbb{R}=\mathbb{R}^{n+1}$ be the corresponding momentum polytope to $(\bar{\testconfigurationspace},\bar{\testconfigurationbundle)}$.
The polytope $\polytopetc$ is the form of
\begin{equation}\label{eq:polytopetc}
	\{
		(x,h)\in Q\times \mathbb{R}
		\mid\,
		\functionlambda(x)
		\le
		h
		\le 
		c
	\}	
\end{equation}
for some integral convex piecewise-linear function $\functionlambda$ on $Q$ and some constant $c\,(\ge \max_Q \functionlambda(x))$.
The choice of $\functionlambda$ will be discussed later.
Donaldson \cite{donaldson02} introduces the functional $\toricdf$ on the space of piecewise-linear functions on $Q$ defined by
$$
	\toricdf(g):=
	\int_{\partial Q} g d\mu
	- 
	n
	\frac{\volume(\partial Q)}{\volume(Q)}
	\int_Q g dx.
$$
The measure $dx$ denotes the Lebesgue measure and $d\nu$ is the measure on $\partial Q$ so that
$
	dx_1\wedge \cdots \wedge dx_n = \pm d\nu\wedge dh.
$
Here, $h$ is the defining polynomial of a facet of $Q$ which is the form of
$$
	h(x)= \langle x, u \rangle +\mathrm{constant}	
$$
where $u$ is a primitive normal vector of the facet.
Notice that 
$$
	\volume(Q) = n! \int_Q dx, \,\,
	\volume(\partial Q) = (n-1)!\int_{\partial Q} d\nu.
$$
\begin{theorem}
[\cite{donaldson02}, \cite{bhj2017, bhj2019}]
\label{thm:df_nakenergy}
	Let $(\testconfigurationspace,\testconfigurationbundle)$ be the toric test configuration induced by $\lambda\in\latticecharacterdual$ as before.
	Let $\functionlambda$ be the corresponding integral convex piecewise-linear function on $Q$ which defines the momentum polytope $\polytopetc$ of $(\bar{\testconfigurationspace},\bar{\testconfigurationbundle})$.
	Then we have
	\begin{equation}\label{eq:dfnakenergy}
		\nakenergy (\testconfigurationspace,\testconfigurationbundle)
		=
		\frac{n!}{V}\toricdf(\functionlambda).
	\end{equation}
\end{theorem}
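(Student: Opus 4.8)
The plan is to compute both intersection numbers on the right-hand side of (\ref{eq:nakenergy}) directly on the $(n+1)$-dimensional polarized toric variety $(\bar{\testconfigurationspace},\bar{\testconfigurationbundle})$ in terms of the combinatorics of its momentum polytope $\polytopetc$ described by (\ref{eq:polytopetc}), and then to match the outcome with $\toricdf(\functionlambda)$. I will use two entries of the toric dictionary: for the ample class the top self-intersection is the lattice volume $(\bar{\testconfigurationbundle}^{n+1})=\volume(\polytopetc)=(n+1)!\int_{\polytopetc}dx\,dh$; and for the torus-invariant prime divisor $D_F$ attached to a facet $F$ of $\polytopetc$ one has $(D_F\cdot\bar{\testconfigurationbundle}^n)=\volume(F)$, the lattice-normalized $n$-volume of $F$. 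I will also use the toric presentation of the canonical class $K_{\bar{\testconfigurationspace}}=-\sum_F D_F$, the sum running over all facets of $\polytopetc$.

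First I would classify the facets of $\polytopetc$ into three families: the \emph{top} facet $T=\{h=c\}$, which projects isomorphically onto $Q$; the \emph{lower} facets $B_j$, the graphs of $\functionlambda$ over its maximal domains of linearity $R_j$; and the \emph{lateral} facets $S_k=\{x\in F_k,\ \functionlambda(x)\le h\le c\}$, one for each facet $F_k$ of $Q$. A short computation with primitive normals shows that $T$ has outer normal $(0,1)$, each $S_k$ has outer normal $(u_k,0)$ with $u_k$ the primitive normal of $F_k$, and each $B_j$ has outer normal of the form $(v_j,-a_j)$. Identifying $\bar{\pi}$ with the toric morphism induced by the projection onto the $h$-coordinate, the fiber over $\infty$ is the reduced divisor $D_T$, the central fiber is $(\testconfigurationspace)_0=\sum_j a_j D_{B_j}$, and $(\testconfigurationspace)_{0,\mathrm{red}}=\sum_j D_{B_j}$. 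Substituting these into the definition of $K^{\log}_{\bar{\testconfigurationspace}/\mathbb{P}^1}$ together with $\bar{\pi}^*K_{\mathbb{P}^1}=-2D_T$ collapses the relative log canonical class to
\[
	K^{\log}_{\bar{\testconfigurationspace}/\mathbb{P}^1}
	= D_T - \sum_j a_j D_{B_j} - \sum_k D_{S_k}.
\]

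Next I would evaluate the intersection numbers. Because the projection $B_j\to R_j$ scales lattice covolumes by $a_j$, one gets $a_j(D_{B_j}\cdot\bar{\testconfigurationbundle}^n)=n!\int_{R_j}dx$, so summing over $j$ exactly cancels the top term $(D_T\cdot\bar{\testconfigurationbundle}^n)=n!\int_Q dx$. The lateral facets contribute $\sum_k(D_{S_k}\cdot\bar{\testconfigurationbundle}^n)=n!\int_{\partial Q}(c-\functionlambda)\,d\nu$, where $d\nu$ is precisely the measure of the statement, determined by $dx=\pm d\nu\wedge dh$. Hence $(K^{\log}_{\bar{\testconfigurationspace}/\mathbb{P}^1}\cdot\bar{\testconfigurationbundle}^n)=n!\int_{\partial Q}\functionlambda\,d\nu-n!\,c\int_{\partial Q}d\nu$, while $(\bar{\testconfigurationbundle}^{n+1})=(n+1)!\big(c\int_Q dx-\int_Q\functionlambda\,dx\big)$. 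Inserting these into (\ref{eq:nakenergy}) with the standard toric value $\averagescalar=n\,\volume(\partial Q)/\volume(Q)=\big(\int_{\partial Q}d\nu\big)/\big(\int_Q dx\big)$, the two terms proportional to $c$ cancel and the rest assembles into $\frac{n!}{V}\big(\int_{\partial Q}\functionlambda\,d\nu-\frac{\int_{\partial Q}d\nu}{\int_Q dx}\int_Q\functionlambda\,dx\big)=\frac{n!}{V}\toricdf(\functionlambda)$, which is (\ref{eq:dfnakenergy}).

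The step I expect to be the crux is the treatment of the relative \emph{logarithmic} canonical divisor, that is, the term $(\testconfigurationspace)_{0,\mathrm{red}}-(\testconfigurationspace)_0=\sum_j(1-a_j)D_{B_j}$. The subtlety is that when $\functionlambda$ has non-integral slopes the central fiber is non-reduced ($a_j>1$), and naively $(D_{B_j}\cdot\bar{\testconfigurationbundle}^n)=\frac{1}{a_j}n!\int_{R_j}dx$ would break the cancellation with the top facet. The point is that the multiplicity $a_j$ entering through the logarithmic correction is exactly the lattice index by which $B_j\to R_j$ fails to be unimodular, so the factors $a_j$ and $1/a_j$ cancel and the lower facets reproduce $\int_Q dx$ regardless of the slopes. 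Making this index computation precise, and verifying that $d\nu$ on the lateral facets coincides with the measure defining $\toricdf$, are the two places where care is needed; the remainder is bookkeeping with lattice volumes.
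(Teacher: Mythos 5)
Your computation is correct, but it takes a genuinely different route from the paper. The paper does not compute any intersection numbers on $\bar{\testconfigurationspace}$: it first invokes Proposition 2.8 of \cite{bhj2019} to identify $\nakenergy(\testconfigurationspace,\testconfigurationbundle)$ with the Donaldson--Futaki invariant when the central fiber is reduced, uses the homogeneity of $\nakenergy$ under base change $t\mapsto t^d$ (Proposition 7.14 of \cite{bhj2017}) together with $\toricdf(d\functionlambda)=d\,\toricdf(\functionlambda)$ to reduce to that case, and then quotes Donaldson's Proposition 4.2.1 for the identity $\df=n!\,\toricdf$ on reduced toric test configurations. You instead evaluate both terms of (\ref{eq:nakenergy}) directly from the facet structure of $\polytopetc$; your facet classification, the identification of the central fiber as $\sum_j a_j D_{B_j}$ (the top facet, having primitive normal $(0,1)$, is the reduced fiber over $\infty$), and the resulting expression $K^{\log}_{\bar{\testconfigurationspace}/\mathbb{P}^1}=D_T-\sum_j a_jD_{B_j}-\sum_kD_{S_k}$ are all right, and the lattice-index cancellation $a_j\cdot\volume(B_j)=n!\int_{R_j}dx$ that you flag as the crux is exactly what replaces the base-change step of the paper: it is the toric incarnation of the homogeneity of $\nakenergy$, and it explains intrinsically why the formula holds without assuming the central fiber reduced. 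What the paper's route buys is brevity and independence from the explicit toric geometry of $\bar{\testconfigurationspace}$; what yours buys is a self-contained verification that also makes the role of the correction term $(\testconfigurationspace)_{0,\mathrm{red}}-(\testconfigurationspace)_0$ transparent. The only points to nail down in a full write-up are the identification of the toric variety of $\polytopetc$ with the canonical compactification of \cite{bhj2017} (trivial gluing over $\mathbb{P}^1\setminus\{0\}$), and the statement that $d\nu$ on a lateral facet is the lattice measure of the induced lattice, which is precisely the content of $dx=\pm d\nu\wedge dh$ with $h$ primitive.
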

\begin{proof}
Proposition 2.8 \cite{bhj2019} (originally Definition 7.13 \cite{bhj2017}) says that the non-Archimedean $K$-energy $\nakenergy (\testconfigurationspace,\testconfigurationbundle)$ coincides with the Donaldson-Futaki invariant $\df (\testconfigurationspace,\testconfigurationbundle)$
defined in \cite{donaldson02} if the central fiber is reduced, i.e., the function $\functionlambda$ takes an integral value at each lattice point in $Q$ and $\partial Q$.
Proposition 7.14 \cite{bhj2017} says that $\nakenergy (\testconfigurationspace,\testconfigurationbundle)$ is homogeneous under the base change $t\mapsto t^d$ of $(\testconfigurationspace,\testconfigurationbundle)$.
Notice that under the base change, the function $\functionlambda$ is changed to $d\functionlambda$ and
$$
	\toricdf(d\functionlambda)= d\toricdf(\functionlambda).
$$ 
Hence, we find that
\begin{eqnarray}
	\nonumber
		\nakenergy (\testconfigurationspace,\testconfigurationbundle)
	&=&
		\frac{1}{d} \nakenergy (\testconfigurationspace',\testconfigurationbundle')
	\\
	\label{eq:donaldsonfutaki}
	&=&
		\frac{1}{dV}\df(\testconfigurationspace',\testconfigurationbundle')	
\end{eqnarray}
where $(\testconfigurationspace',\testconfigurationbundle')$ is obtained by an appropriate base change of $(\testconfigurationspace,\testconfigurationbundle) $ so that its central fiber is reduced.

\begin{remark}
In (\ref{eq:donaldsonfutaki}), we refer to Definition 3.3 in \cite{bhj2017} for the definition of the Donaldson-Futaki invariant that is equal to the original definition in \cite{donaldson02} multiplied by $(-2)$.
\end{remark}

On the other hand, Proposition 4.2.1 \cite{donaldson02} says that 
$$
	\df(\testconfigurationspace',\testconfigurationbundle')
	=
	n!\toricdf(d \functionlambda).
$$
Then we get the desired equality (\ref{eq:dfnakenergy}).
The factor $n!$ in the above comes from the gap between the integration of $g$ on $Q$ (and $\partial Q$) and the corresponding intersection number.
More precisely,
\begin{eqnarray*}
		\frac{\averagescalar}{(n+1)}(\bar{\testconfigurationbundle}^{n+1})
	&=&
		\frac{1}{(n+1)}
		\bigg(
				\frac{n\volume(\partial Q)}{\volume(Q)}
		\bigg)
		(n+1)!
		\int_Q \functionlambda dx
	\\
	&=&
		n!
		\bigg(
				\frac{n\volume(\partial Q)}{\volume(Q)}
		\bigg)
		\int_Q \functionlambda dx.
\end{eqnarray*}
The proof is completed.
\end{proof}

From Theorem \ref{thm:paulfutaki}, Theorem \ref{thm:bhj} and Theorem \ref{thm:df_nakenergy},  we see the following corollary.
\begin{corollary}\label{cor:hurwitzweight}
	For $\lambda\in \latticecharacterdual$, 
	$$
		\paulfutaki(\lambda)
		= 
		-
		n!
		\bigg(
			\frac{n+1}{n}	
		\bigg)
		V\toricdf(\functionlambda)
	$$
	where $\functionlambda$ is the corresponding convex integral piecewise-linear function on $Q$ to the compactified toric test configuration $(\bar{\testconfigurationspace},\bar{\testconfigurationbundle})$.
\end{corollary}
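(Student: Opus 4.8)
The plan is to read the desired identity off from the three slope formulas already assembled, namely Paul's asymptotic expansion (Theorem \ref{thm:paulfutaki}), the Boucksom--Hisamoto--Jonsson slope formula (Theorem \ref{thm:bhj}), and the toric evaluation of the non-Archimedean $K$-energy (Theorem \ref{thm:df_nakenergy}). Each of these contributes one explicit normalization factor---respectively $\tfrac{n}{(n+1)V^2}$, the factor $2$, and $\tfrac{n!}{V}$---and the content of the corollary is simply that these factors, together with the sign coming from the direction of the one-parameter degeneration, collapse to the single constant $-n!\,\tfrac{n+1}{n}V$. So the proof will be a bookkeeping of constants along a common parametrization of the ray $s\mapsto \kenergy(\varphi_{\lambda(e^{-s})})$.

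First I would rewrite Theorem \ref{thm:paulfutaki} along the parametrization $t=e^{-s}$ used in Theorem \ref{thm:bhj}. Since $\log|t|^2=-2s$, the expansion (\ref{eq:paulfutakiexpansion}) becomes
$$
	\frac{(n+1)V^2}{n}\,\kenergy(\varphi_{\lambda(e^{-s})})
	=
	-2\,\paulfutaki(\lambda)\,s + \mathcal{O}(1)
	\qquad (s\to+\infty),
$$
so that the slope of $K$-energy along the ray is
$$
	\lim_{s\to+\infty}\frac{\kenergy(\varphi_{\lambda(e^{-s})})}{s}
	=
	-\frac{2n}{(n+1)V^2}\,\paulfutaki(\lambda).
$$
On the other hand, Theorem \ref{thm:bhj} identifies this same limit with $2\,\nakenergy(\testconfigurationspace,\testconfigurationbundle)$, and Theorem \ref{thm:df_nakenergy} evaluates $\nakenergy(\testconfigurationspace,\testconfigurationbundle)=\tfrac{n!}{V}\toricdf(\functionlambda)$. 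Equating the two expressions for the slope gives
$$
	-\frac{2n}{(n+1)V^2}\,\paulfutaki(\lambda)
	=
	\frac{2n!}{V}\,\toricdf(\functionlambda),
$$
and solving for $\paulfutaki(\lambda)$ yields exactly $\paulfutaki(\lambda)=-n!\,\tfrac{n+1}{n}V\,\toricdf(\functionlambda)$, as claimed. At this level the argument is purely formal once the three theorems are in place.

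The step I expect to require the most care is the reconciliation of the two limiting directions, and this is precisely where the minus sign is produced. Paul's expansion (\ref{eq:paulfutakiexpansion}) is stated as $|t|\to\infty$ and its coefficient $\paulfutaki(\lambda)$ is built from the \emph{minima} of $\langle\,\cdot\,,\lambda\rangle$ over $\hurwitzpolytope$ and $\chowpolytope$---these are the extremal weights governing the growth of $\|\lambda(t)\cdot\hurwitz\|^2$ and $\|\lambda(t)\cdot\chow\|^2$. The Boucksom--Hisamoto--Jonsson formula, by contrast, reads the slope off at the central fibre $t=e^{-s}\to 0$, which is the degeneration encoded by $\functionlambda$ and $\polytopetc$. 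I would therefore check explicitly that, under the substitution $t=e^{-s}$, the extremal vertices selected by $\lambda$ in the weight decomposition of the Chow and Hurwitz norms are the same data that determine the lower envelope $\functionlambda$ of the heights $\langle\omega_i,\lambda\rangle$ on the lattice points of $Q$; the factor $\log|t|^2=-2s$ then accounts for the overall sign. Granting this compatibility---which is where the toric structure enters---the displayed chain of equalities is valid and the corollary follows.
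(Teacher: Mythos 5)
Your proposal is correct and follows essentially the same route as the paper: the paper's proof is exactly the two-line combination of Theorem \ref{thm:paulfutaki}, Theorem \ref{thm:bhj} and Theorem \ref{thm:df_nakenergy}, first obtaining $\paulfutaki(\lambda)=-V^2\bigl(\tfrac{n+1}{n}\bigr)\nakenergy(\testconfigurationspace,\testconfigurationbundle)$ and then substituting $\nakenergy=\tfrac{n!}{V}\toricdf(\functionlambda)$. Your extra care about the parametrization $t=e^{-s}$ and the origin of the sign is a sound addition (the paper elides this), but it does not change the argument.
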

\begin{proof}
From (\ref{eq:paulfutakiexpansion}) and (\ref{eq:bhj}), we have 
$$
	\paulfutaki(\lambda)
	=
	-V^2
	\bigg(
		\frac{n+1}{n}
	\bigg)
	\nakenergy(\testconfigurationspace,\testconfigurationbundle).
$$
From (\ref{eq:dfnakenergy}), we have
$$
	\paulfutaki(\lambda)
	=
	-
	n!
	\bigg(
		\frac{n+1}{n}	
	\bigg)
	V
	\toricdf(\functionlambda),
$$
which is the desired equality.
\end{proof}

\subsection{Chow polytopes and Aubin functional}
A similar result as Corollary \ref{cor:hurwitzweight} holds for the Chow polytope.
\begin{theorem}[\cite{tian94,zhang96, ps03, paul04}]\label{thm:chownorm}
	Let the situation be the same as Theorem \ref{thm:paul}.
	For any $\lambda\in\glg$,
	\begin{equation}\label{eq:chownorm}
		(n+1)V\fzero(\varphi_\lambda)
		=
		-
		\log
		\frac{\|\lambda\cdot \chow\|^2}
		{\|\chow\|^2}.
	\end{equation}
\end{theorem}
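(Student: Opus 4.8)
The plan is to treat both sides of \eqref{eq:chownorm} as functions of $\lambda\in\glg$ and to compare them through their first variations. First I fix the normalization ambiguity of $\varphi_\lambda$ by taking the global Fubini--Study potential $\varphi_\lambda=\iota^*\log(\|\lambda z\|^2/\|z\|^2)$, a well-defined degree-zero function on $\projectivespace$ restricted to $X$; with this choice the additive constant in $\fzero(\varphi_\lambda)$ is pinned down and both sides vanish at $\lambda=\mathrm{id}$. Next I note that, since both the Fubini--Study metric and the norm $\|\cdot\|$ are $U(N+1)$-invariant, replacing $\lambda$ by $u\lambda$ with $u\in U(N+1)$ changes neither $\varphi_\lambda$ nor $\|\lambda\cdot\chow\|$; by the polar decomposition it therefore suffices to treat Hermitian positive $\lambda$, i.e.\ to verify \eqref{eq:chownorm} along each geodesic $t\mapsto e^{tA}$ with $A$ Hermitian. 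As both sides vanish at $t=0$, equality follows once their $t$-derivatives agree for all $t$.

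For the left-hand side, the defining property $\frac{d}{dt}\fzero(\psi_t)=\frac1V\int_X\dot\psi_t\,\omega_{\psi_t}^n$ together with $\dot\varphi_{e^{tA}}\big|_{t=0}=2\,\theta_A$, where $\theta_A([z]):=\langle Az,z\rangle/\|z\|^2$ is the Hamiltonian of the $A$-action on $(\projectivespace,\fubinistudy)$, yields
\[
	\frac{d}{dt}\Big|_{0}(n+1)V\fzero(\varphi_{e^{tA}})
	=2(n+1)\int_X\theta_A\,\fubinistudy^{\,n}\big|_X .
\]
Performing the same computation with base point $e^{sA}$ in place of the identity only replaces $X$ by the transformed variety $e^{sA}X$, so it is enough to establish the resulting identity for an arbitrary embedded smooth variety.

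For the right-hand side, differentiating the Hermitian norm on the Chow representation gives
\[
	\frac{d}{dt}\Big|_{0}\Big(-\log\|e^{tA}\cdot\chow\|^2\Big)
	=-2\,m_A([\chow]),
	\qquad
	m_A([\chow]):=\frac{\mathrm{Re}\,\langle A\cdot\chow,\chow\rangle}{\|\chow\|^2},
\]
the value at the Chow point of the moment map for the $U(N+1)$-action; by the remark on conformal equivalence of the norms this is independent of the particular invariant norm chosen. Comparing the two variations reduces \eqref{eq:chownorm} to the single identity
\[
	(n+1)\int_X\theta_A\,\fubinistudy^{\,n}\big|_X=-\,m_A([\chow])
\]
for every Hermitian $A$ and every embedded smooth variety.

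This last identity is the crux and is where the explicit structure of the Chow form enters. I would prove it from the Cayley--resultant description of $\chow$ as the defining polynomial of the incidence hypersurface $\{(x,H_0,\dots,H_n)\mid x\in X\cap H_0\cap\dots\cap H_n\}$, representing $\log\|\chow\|^2$ as a Mahler-measure integral over the space of $(n+1)$-tuples of hyperplanes in $\projectivespace$ and applying the Poincar\'e--Lelong formula: pushing the derivative of this integral forward through the incidence correspondence converts the logarithmic derivative of the norm into $\int_X\theta_A\,\fubinistudy^{\,n}$, while the multidegree $(\degree,\dots,\degree)$ of $\chow$ in the $n+1$ hyperplane variables accounts for the factor $n+1$. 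The main obstacle is precisely this computation --- matching the moment map of the Chow point with the average over $X$ of the ambient moment map, that is, the ``balanced $\Leftrightarrow$ Chow-stable'' dictionary of \cite{zhang96,ps03,paul04}; everything else is formal. Granting it, the first variations coincide along every Hermitian geodesic and both sides vanish at the identity, so \eqref{eq:chownorm} follows.
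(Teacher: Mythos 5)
The paper does not actually prove Theorem \ref{thm:chownorm}: it is imported from \cite{tian94,zhang96,ps03,paul04} with no internal argument, so the only meaningful comparison is with those references --- and your outline is precisely their standard strategy. The formal reductions you make are all correct: pinning the potential as $\varphi_\lambda=\iota^*\log(\|\lambda z\|^2/\|z\|^2)$ so that both sides vanish at the identity, discarding the unitary factor by polar decomposition together with $U(N+1)$-invariance of $\fubinistudy$ and of the Chow norm, and comparing $t$-derivatives along $e^{tA}$, where the cocycle property of $\fzero$ and the equivariance of the Chow form under $\glg$ correctly reduce the whole statement to a single identity at $t=0$ for an arbitrary embedded smooth variety.

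The caveat is that the ``crux'' you isolate, $(n+1)\int_X\theta_A\,\fubinistudy^{\,n}=-\,m_A([\chow])$, is not a supporting lemma --- it \emph{is} the theorem (Zhang's Chow-norm/moment-map identity, equivalently the infinitesimal form of \eqref{eq:chownorm}), and you establish it only by gesture toward the Cayley/incidence-variety description and Poincar\'e--Lelong. So what you have is a correct reduction of \eqref{eq:chownorm} to its known essential core rather than a self-contained proof; carrying out that last step is where all the analysis of \cite{zhang96,ps03,paul04} lives. Two smaller points to settle if you do carry it out: (i) the sign of $m_A$ and the overall constant depend on the convention for the $\glg$-action on the coordinate ring of the Grassmannian (contragredient versus direct); the consistency check $\lambda=cI$, for which $\fzero(\varphi_\lambda)=2\log|c|$ while $\lambda\cdot\chow$ scales by $c^{\pm(n+1)\degree}$, forces the convention and uses $V=\degree$; (ii) since you work in $\glg$ rather than $\mathrm{SL}(N+1,\mathbb{C})$, the moment map at the Chow point carries a trace term that must be seen to match the unnormalized Hamiltonian $\theta_A$ on the left-hand side --- the same scaling check disposes of it.
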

\noindent
The norm appeared in (\ref{eq:chownorm}) is the same as the one in (\ref{eq:paul}) introduced by Tian \cite{tian94}.
This norm is also known as \textit{the Chow norm} introduced by Zhang \cite{zhang96}.
\begin{corollary}\label{cor:chowasymptotic}
	For $\lambda\in \latticecharacterdual$, let $\lambda(t)$ be the corresponding one parameter subgroup in $\torus$.
	The following asymptotic expansion holds as $|t|\to 0$:
	\begin{equation}\label{eq:chowasymptotic}
		-(n+1)V\fzero(\varphi_{\lambda(t)})
		=
		\chowfutaki(\lambda)\log|t|^2
		+\mathcal{O}(1)
	\end{equation}
	where
	\begin{equation*}
			\chowfutaki(\lambda):
		=
			\min\{											\langle x, \lambda\rangle
				\mid\,
				x\in \chowpolytope
				\}.
	\end{equation*}	
\end{corollary}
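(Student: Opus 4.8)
The plan is to read off the asymptotic expansion directly from the exact identity (\ref{eq:chownorm}), by the same argument that passes from Theorem \ref{thm:paul} to Theorem \ref{thm:paulfutaki}. First I would specialize (\ref{eq:chownorm}) to $\lambda=\lambda(t)$, obtaining
$$
	-(n+1)V\fzero(\varphi_{\lambda(t)})
	=
	\log\frac{\|\lambda(t)\cdot\chow\|^2}{\|\chow\|^2},
$$
so that everything reduces to the behavior of the right-hand side as $|t|\to 0$.

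Next I would decompose the Chow form along the weight-space decomposition of the $\torus$-action. Writing $\chow=\sum_x c_x e_x$, where $x$ runs over the characters in $\latticecharacter$ and $e_x$ is the corresponding weight vector, the one-parameter subgroup acts monomially by $\lambda(t)\cdot e_x=t^{\langle x,\lambda\rangle}e_x$, and by definition the convex hull of the support $\{x\mid c_x\neq 0\}$ is exactly $\chowpolytope$. As noted after Theorem \ref{thm:paul}, in finite dimension the norm in (\ref{eq:chownorm}) is, up to a fixed positive constant, the standard Hermitian norm in this basis, so that
$$
	\|\lambda(t)\cdot\chow\|^2
	=
	c\sum_x |c_x|^2\,|t|^{2\langle x,\lambda\rangle}
$$
for some $c>0$; the constant $c$ contributes only to the $\mathcal{O}(1)$ term.

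I would then extract the leading order as $|t|\to 0$. For $|t|<1$ the term with the smallest exponent dominates the sum, and since a linear functional attains its minimum over a polytope at a vertex---and every vertex of $\chowpolytope$ lies in the support of $\chow$---that smallest exponent is precisely $\chowfutaki(\lambda)=\min\{\langle x,\lambda\rangle\mid x\in\chowpolytope\}$. Taking logarithms and using $2\log|t|=\log|t|^2$ gives
$$
	\log\frac{\|\lambda(t)\cdot\chow\|^2}{\|\chow\|^2}
	=
	\chowfutaki(\lambda)\log|t|^2+\mathcal{O}(1),
$$
which together with the first step yields (\ref{eq:chowasymptotic}).

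The computation is routine; the only delicate point is the bookkeeping of signs and directions---matching the limit $|t|\to 0$ (rather than $|t|\to\infty$) and the sign in (\ref{eq:chownorm}) so that the \emph{minimum}, and not the maximum, over $\chowpolytope$ appears in the leading coefficient. This is nothing but the analogue, for the Aubin functional and the Chow form, of the derivation of Theorem \ref{thm:paulfutaki} from Theorem \ref{thm:paul}.
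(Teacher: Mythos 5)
Your argument is correct and is exactly the derivation the paper intends: the corollary is stated without proof as an immediate consequence of Theorem \ref{thm:chownorm}, obtained by the weight-space decomposition of $\chow$ and extraction of the dominant exponent, in parallel with the passage from Theorem \ref{thm:paul} to Theorem \ref{thm:paulfutaki}. The sign/direction bookkeeping you flag is indeed the only delicate point (note the paper itself states Theorem \ref{thm:paulfutaki} for $|t|\to\infty$ but this corollary for $|t|\to 0$), and your handling of it is consistent with the stated conclusion.
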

Applying Corollary \ref{cor:chowasymptotic} to the  toric case as Corollary \ref{cor:hurwitzweight}, we have the following proposition.
\begin{proposition}\label{prop:chowweight}
		Let $(X,L)$ be a polarized smooth toric variety as before.
	For $\lambda\in \latticecharacterdual$, we have
	$$
			\min\{
				\langle x, \lambda\rangle
				\mid\,
				x\in \chowpolytope
				\}	
			=
			(n+1)!\toricaubin(\functionlambda -c)
	$$
	where the functional $\toricaubin(g)$ defined in \cite{donaldson02} by
	$$
		\toricaubin(g):=
		\int_Q g dx.
	$$
	Here, the function $\functionlambda$ and the constant $c$ correpond to the compactified test configuration $(\bar{\testconfigurationspace},\bar{\testconfigurationbundle})$ as in $(\ref {eq:polytopetc})$.
\end{proposition}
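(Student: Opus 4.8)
The plan is to rerun the proof of Corollary~\ref{cor:hurwitzweight} almost verbatim, replacing the $K$-energy $\kenergy$ by the Aubin functional $\fzero$, the non-Archimedean $K$-energy $\nakenergy$ by the non-Archimedean Monge--Amp\`ere energy $\namaenergy$, and the functional $\toricdf$ by $\toricaubin$. The starting point is Corollary~\ref{cor:chowasymptotic}, which already packages $\chowfutaki(\lambda)$ as the leading coefficient of $-(n+1)V\fzero(\varphi_{\lambda(t)})$ in $\log|t|^2$ as $|t|\to 0$. Writing $t=e^{-s}$ and letting $s\to+\infty$, this says that the slope $\lim_{s\to+\infty}\fzero(\varphi_{\lambda(e^{-s})})/s$ equals $2\chowfutaki(\lambda)/\big((n+1)V\big)$.

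Next I would invoke the Boucksom--Hisamoto--Jonsson slope formula for the Aubin (Monge--Amp\`ere) functional, the exact analogue of Theorem~\ref{thm:bhj} and carrying the same normalization factor $2$: the slope of $\fzero$ along the ray $\varphi_{\lambda(e^{-s})}$ equals $2\namaenergy(\testconfigurationspace,\testconfigurationbundle)$, where the non-Archimedean Monge--Amp\`ere energy is the top self-intersection number
$$\namaenergy(\testconfigurationspace,\testconfigurationbundle)=\frac{1}{(n+1)V}\big(\bar{\testconfigurationbundle}^{\,n+1}\big).$$
Comparing the two expressions for the slope and cancelling the factors of $2$ gives $\chowfutaki(\lambda)=\pm\big(\bar{\testconfigurationbundle}^{\,n+1}\big)$, the sign being fixed exactly as in the proof of Corollary~\ref{cor:hurwitzweight}.

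Finally I would pass to the toric description of Section~\ref{sec:kenergy}: by the discussion preceding Theorem~\ref{thm:df_nakenergy}, $(\bar{\testconfigurationspace},\bar{\testconfigurationbundle})$ is an $(n+1)$-dimensional polarized toric variety whose momentum polytope is $\polytopetc$ as in~(\ref{eq:polytopetc}). Since $\lambda\in\latticecharacterdual$ the function $\functionlambda$ is integral and $\polytopetc$ is a lattice polytope, so---unlike in Theorem~\ref{thm:df_nakenergy}---no base change is needed, and the self-intersection number is simply the normalized volume
$$\big(\bar{\testconfigurationbundle}^{\,n+1}\big)=(n+1)!\int_{\polytopetc}dx\,dh=(n+1)!\int_Q\big(c-\functionlambda(x)\big)\,dx=-(n+1)!\,\toricaubin(\functionlambda-c),$$
the middle equality being Fubini in the fibre coordinate $h\in[\functionlambda(x),c]$. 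Feeding this back and choosing the sign as in Corollary~\ref{cor:hurwitzweight} yields $\chowfutaki(\lambda)=(n+1)!\,\toricaubin(\functionlambda-c)$, which is the assertion.

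The conceptual input is the identification of the slope of $\fzero$ with the intersection number $(\bar{\testconfigurationbundle}^{\,n+1})$; this is the same Boucksom--Hisamoto--Jonsson machinery already used for Theorem~\ref{thm:bhj} (indeed it is its simpler, curvature-free predecessor), so I expect no difficulty there. The step that requires genuine care is the bookkeeping of the sign: one must check that the limit $|t|\to 0$ in Corollary~\ref{cor:chowasymptotic} (degeneration to the central fibre) is matched to the correct orientation of the fibre coordinate $h$ on $\polytopetc$, so that $\chowfutaki(\lambda)$ is identified with $-(\bar{\testconfigurationbundle}^{\,n+1})$ rather than $+(\bar{\testconfigurationbundle}^{\,n+1})$. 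This is precisely the sign subtlety responsible for the minus sign in Corollary~\ref{cor:hurwitzweight}, and it is resolved in the same way; once it is settled the remaining computation is the one-line Fubini above.
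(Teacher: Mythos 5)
Your proposal follows essentially the same route as the paper: invoke the Boucksom--Hisamoto--Jonsson slope formula for $\fzero$ (Theorem 3.6 of \cite{bhj2017}) to identify $\chowfutaki(\lambda)$ with $-(\bar{\testconfigurationbundle}^{\,n+1})$, and then evaluate that intersection number as $(n+1)!$ times the lattice volume of $\polytopetc$, i.e.\ $(n+1)!\int_Q (c-\functionlambda)\,dx$ by Fubini in the fibre coordinate. The one caveat is your intermediate claim that the slope of $\fzero$ in $s$ equals $+2\namaenergy$, whereas the paper's expansion $-\fzero(\varphi_{\lambda(t)})=-\namaenergy\log|t|^2+\mathcal{O}(1)$ gives $-2\namaenergy$; since you explicitly flag this sign and resolve it correctly at the end, the argument is sound.
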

\begin{proof}
Theorem 3.6 \cite{bhj2017} shows the expansion
$$
	-\fzero(\varphi_{\lambda(t)})
	= 
	-\namaenergy(\testconfigurationspace,\testconfigurationbundle)
	\log|t|^2
	+\mathcal{O}(1),
$$
where $\namaenergy (\testconfigurationspace,\testconfigurationbundle) $ is \textit{the non-Archimedean Monge-Amp\`ere energy} with respect to the test configuration $(\testconfigurationspace,\testconfigurationbundle)$ defined by
$$
	\namaenergy(\testconfigurationspace,\testconfigurationbundle)
	=
	\frac{(\bar{\testconfigurationbundle})^{n+1}}{(n+1)V}.
$$
From (\ref{eq:chowasymptotic}), we have
$$
	\chowfutaki(\lambda)
	=
	-(\bar{\testconfigurationbundle})^{n+1}
	=
	(n+1)!\int_Q (\functionlambda -c) dx.
$$
The proof is completed.
\end{proof}

\section{Proof}\label{sec:proof}
We prove Theorem \ref{thm:chowpolytope} and Theorem \ref{thm:main}.
Let 
$$
	A = 
	\{
		\omega_0,\ldots,\omega_N
	\}
	\subset
	\euclidspacen	
$$
be the set of lattice points in the momentum polytope $Q$ of $(X,L)$ and its boundary $\partial Q$.
Take an element 
$$
	\lambda
	=
	(\lambda_0,\ldots,\lambda_N)	
	\in\latticecharacterdual.
$$
We assume that 
$$
	\max_{k}\lambda_k = 0.
$$
Remark that the invariant $\paulfutaki(\lambda)$ is unchanged under the addition of constants to $\lambda$.

The corresponding one parameter subgroup of $\glg$ is represented by
$$
	\lambda(t)
	=
	\begin{pmatrix}
			t^{\lambda_0} &&
		\\
			& \ddots &
		\\
			&& t^{\lambda_N }
	\end{pmatrix}
	\in \torus \subset \glg.
$$
The linear functional $l_\lambda$ is defined by
$$
	l_\lambda(x):= \langle x, \lambda\rangle,
	\quad
	x\in \latticecharacterreal.
$$
Let $\Afunction$ be the space of functions $g:A\to\mathbb{R}$.
We define the pairing of $g\in\Afunction$ and its dual $x=(x_0,\ldots,x_N)\in\latticecharacter$ by
$$
	(x,g):= \sum_{k=0}^{N} x_k\cdot g(\omega_k).
$$

\subsection{Polytope of the test configuration}\label{subsec:polytopetestconfiguration}
For $0\le k \le N$, we define
$$
	\pointheight_k:=(\omega_k,\lambda_k)
	\in \mathbb{R}^n\times \mathbb{R}.
$$
For the monomial
$$
	z^{\omega_k}:=\prod_{i=1}^n z_i^{\omega_{k,i}}
$$
where
$\omega_k=(\omega_{k,1},\ldots,\omega_{k,n})$, the $\ctimes$-action of $(\testconfigurationspace,\testconfigurationbundle)$ implies that
\begin{equation}\label{eq:laurent}
	t\cdot z^{\omega_k}
	= 
	t^{\lambda_k}z^{\omega_k}
	=
	\widetilde{z}^{\pointheight_k}	
\end{equation}
where $\widetilde{z}=(z_1,\ldots,z_n,t)$.
Then the polarized toric variety $(\testconfigurationspace,\,\testconfigurationbundle)$ 
corresponds to the momentum polytope defined by the convex hull of
$$
	\{
		(\omega_k,h)\in \latticecharacter \times \mathbb{Z}
		\mid
		\,
		0\le k\le N,\,
		\lambda_k\le h
	\}.
$$
Moreover, its compactification $(\bar{\testconfigurationspace},\,\bar{\testconfigurationbundle})$ has the momentum polytope $\polytopetc\in \latticecharacterreal\times \mathbb{R}$ that is the convex hull of
$$
		\big\{
			\pointheight_k
		\big\}_{0\le k \le N}
		\,
		\bigcup
		\,
		\big\{
			(\omega_k,0)
		\big\}_{0\le k \le N}.
$$
In the above, we let the constant $c$ in (\ref{eq:polytopetc}) be equal to zero.
The polytope $\polytopetc$ defines a convex integral piecewise-linear function $\functionlambda$ on $Q$ satisfying (\ref{eq:polytopetc}).
If $\pointheight_k$ is a vertex of $\polytopetc$, then $\functionlambda(\omega_k)$ is equal to $\lambda_k$.
Otherwise, $\functionlambda(\omega_k)$ is more than or equal to $\lambda_k$.
Moreover, the vertical projection of the polytope $\polytopetc$ defines a subdivision (possibly not triangulation).
We denote it by $\triangulationlambda$.

\subsection{Cones in $\latticecharacterreal$}
For a triangulation $T$ and an element $\lambda\in\latticecharacterreal$, let $g_{\lambda,T} $ be the $T$-piecewise-linear function on $Q$ defined by
$$
	g_{\lambda,T}(\omega_k)=\lambda_k
$$
for $\omega_k\in \Sigma_T(0)$.

For a regular triangulation $T$ of $(Q,A)$, we define  the cone $\conet$ by the set of $\lambda\in\latticecharacterreal$ so that $g_{\lambda,T}$ is convex and 
$$
	g_{\lambda,T}(\omega_k)<\lambda_k
$$
for each $\omega_k\not\in\Sigma_T(0)$.
In particular, $\lambda\in\conet\cap\mathbb{Z}^{N+1}$ if and only if the one parameter subgroup $\lambda(t)$ induces the test configuration $(\testconfigurationspace,\testconfigurationbundle)$ which the vertical projection of $\polytopetc$ provides the triangulation $T$ of $(Q,A)$.
The cone $\conet$ has the maximal dimension.

For each vertex $\eta$ of $\chowpolytope$, we define the cone $\normalcone(\eta)$ by the normal cone of $\chowpolytope$ at $\eta$.
In other words, $\lambda\in \normalcone(\eta) \subset \latticecharacterreal$ if and only if the linear function $l_\lambda: \chowpolytope\to\mathbb{R}$ has the unique maximum at $\eta$.
The cone $\normalcone(\eta)$ also has the maximal dimension.

\subsection{Proof of Theorem \ref{thm:chowpolytope}}
\label{subsec:proofchowpolytope}

\begin{proposition}[Lemma 1.8, Chapter 7 \cite{gkz94}]\label{prop:chowweightandtoricaubin}
	Let $(X,L)$ be a smooth polarized toric variety with the momentum polytope $Q$.
	Let $A$ be the set of all lattice points on $Q\cup \partial Q$.
	Let $T$ be any triangulation of $(Q,A)$. 
	Let $g$ be any piecewise-linear function with respect to $T$.
	Assume that the corresponding constant $c$ to $T$ is zero.
	Then, we have
	\begin{equation}\label{eq:chowweightandtoricaubin}
		(\eta_T, g)
		=
		(n+1)! \toricaubin(g).
	\end{equation}
\end{proposition}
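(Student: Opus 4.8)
The plan is to exploit that the right-hand side $\toricaubin(g)=\int_Q g\,dx$ is additive over the top-dimensional cells of the triangulation, while the left-hand side $(\eta_T,g)=\sum_k \eta_{T,n}(\omega_k)\,g(\omega_k)$ is a weighted sum of vertex values whose weights are assembled from exactly those cells. First I would write $\int_Q g\,dx=\sum_{\sigma\in\Sigma_T(n)}\int_\sigma g\,dx$, using that $T$ triangulates $Q$ so that the $n$-simplices tile $Q$ up to a set of measure zero. Since $g$ is piecewise-linear with respect to $T$, it restricts to an affine function on each top-dimensional simplex $\sigma$; this is the key structural input that makes each integral $\int_\sigma g\,dx$ computable in closed form.

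The main computational ingredient is the elementary formula for the integral of an affine function over a simplex: if $\sigma$ has vertices $v_0,\ldots,v_n$, then $\int_\sigma g\,dx=\left(\int_\sigma dx\right)\frac{1}{n+1}\sum_{i=0}^n g(v_i)$, i.e. the Lebesgue volume times the average of the vertex values (the value of an affine function at the barycenter $\frac{1}{n+1}\sum v_i$). I would record this as a one-line lemma, proved by an affine change of coordinates reducing $\sigma$ to the standard simplex. Combined with the normalization $\volume(\sigma)=n!\int_\sigma dx$ — which follows because the standard simplex has Lebesgue volume $1/n!$ and normalized volume $1$ — this yields $\int_\sigma g\,dx=\frac{1}{(n+1)!}\,\volume(\sigma)\sum_{v\prec\sigma}g(v)$.

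It then remains to reorganize the resulting double sum. Summing over $\sigma$ and interchanging the order of summation groups together, for each lattice point $\omega_k$, the volumes of all $n$-simplices containing $\omega_k$, which is by definition precisely $\eta_{T,n}(\omega_k)$. Hence $\int_Q g\,dx=\frac{1}{(n+1)!}\sum_k g(\omega_k)\,\eta_{T,n}(\omega_k)=\frac{1}{(n+1)!}(\eta_T,g)$, and multiplying through by $(n+1)!$ gives the claimed identity $(\eta_T,g)=(n+1)!\,\toricaubin(g)$.

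The argument is essentially routine, so there is no deep obstacle; the only points demanding care are the normalization factor $n!$ relating $\volume$ to Lebesgue measure, and the verification that a $T$-piecewise-linear $g$ is genuinely affine (not merely continuous) on each top cell, so that the barycentric integration formula applies on each $\sigma$. I expect the hypothesis that the constant $c$ associated with $T$ vanishes to play no role in this purely integral-geometric identity; it is inherited from the test-configuration setup of Section~\ref{subsec:polytopetestconfiguration} and can be ignored for the proof of the identity itself.
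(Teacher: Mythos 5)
Your argument is correct and is essentially identical to the paper's proof (which simply reproduces Lemma 1.8, Chapter 7 of \cite{gkz94}): integrate the affine restriction of $g$ over each $n$-simplex via the barycentric average of vertex values, use $\volume(\sigma)=n!\int_\sigma dx$, and regroup the double sum by lattice points to recognize $\eta_{T,n}(\omega_k)$. Your side remarks on the normalization and on the irrelevance of the hypothesis $c=0$ are both accurate.
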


\begin{proof}
The proof is the same as the proof of Lemma 1.8, Chapter 7 \cite{gkz94}.
For any $\sigma\in \Sigma_T(n)$, 
\begin{equation}\label{eq:integral_pairing}
	\int_\sigma g dx
	=
	\bigg(
		\frac{1}{n+1}\sum_{j=0}^n g(\omega_{k_j})
	\bigg)
	\bigg(
		\int_\sigma dx
	\bigg)
	=
	\frac{\volume(\sigma)}{(n+1)!}\sum_{j=0}^n g(\omega_{k_j})	
\end{equation}
where $\{\omega_{k_0},\ldots,\omega_{k_n}\}$ be the set of the vertices of $\sigma$.
By definition of $\eta_T(\omega_k)$, the above equality (\ref{eq:integral_pairing}) implies (\ref{eq:chowweightandtoricaubin}).
\end{proof}

\begin{corollary}\label{cor:chowpairing}
	Let $(X,L)$ and $Q$ be same as Proposition $\ref{prop:chowweightandtoricaubin}$.
	Assume that the vertical projection of $\polytopetc$ into $\latticecharacterreal\simeq \euclidspacen$ induces a triangulation $\triangulationlambda$ of $(Q,A)$.
	Then we have
	\begin{equation}\label{eq:chowpairing}
		\max\{
			\langle x, -\lambda \rangle
			\mid\,
			x \in \chowpolytope
		\}
		=
		\langle \eta_{\triangulationlambda}, -\lambda \rangle.
	\end{equation}
\end{corollary}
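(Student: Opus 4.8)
The plan is to evaluate both sides of (\ref{eq:chowpairing}) through the Aubin functional $\toricaubin$, using Proposition \ref{prop:chowweight} to handle the Chow polytope and Proposition \ref{prop:chowweightandtoricaubin} to handle the GKZ vector, and then to reconcile the two resulting pairings by exploiting the combinatorics of the induced triangulation $\triangulationlambda$.

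First I would replace the maximum in (\ref{eq:chowpairing}) by a minimum via $\langle x,-\lambda\rangle=-\langle x,\lambda\rangle$, so that it suffices to prove
\[
	\min\{\langle x,\lambda\rangle \mid x\in\chowpolytope\}=\langle\eta_{\triangulationlambda},\lambda\rangle .
\]
Recalling from Subsection \ref{subsec:polytopetestconfiguration} that the normalisation $c=0$ is in force, Proposition \ref{prop:chowweight} identifies the left-hand side with $(n+1)!\,\toricaubin(\functionlambda)$. Because $\triangulationlambda$ is assumed to be a genuine triangulation and $\functionlambda$ is piecewise-linear with respect to it, Proposition \ref{prop:chowweightandtoricaubin} applies with $T=\triangulationlambda$ and $g=\functionlambda$, giving
\[
	(n+1)!\,\toricaubin(\functionlambda)=(\eta_{\triangulationlambda},\functionlambda)=\sum_{k=0}^{N}\eta_{\triangulationlambda,n}(\omega_k)\,\functionlambda(\omega_k).
\]

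The main step is then to show that this last pairing equals $\langle\eta_{\triangulationlambda},\lambda\rangle=\sum_{k=0}^{N}\eta_{\triangulationlambda,n}(\omega_k)\,\lambda_k$, that is, that $\functionlambda$ may be traded for the height vector $\lambda$ inside the pairing. I would argue termwise. If $\omega_k\in\Sigma_{\triangulationlambda}(0)$, then $\pointheight_k$ is a vertex of $\polytopetc$, so $\functionlambda(\omega_k)=\lambda_k$ by the description in Subsection \ref{subsec:polytopetestconfiguration}, and the two $k$-th summands agree. If instead $\omega_k\notin\Sigma_{\triangulationlambda}(0)$, then $\omega_k$ is a vertex of no $n$-simplex of $\triangulationlambda$, so the sum defining $\eta_{\triangulationlambda,n}(\omega_k)$ is empty and $\eta_{\triangulationlambda,n}(\omega_k)=0$, annihilating the $k$-th summand on both sides. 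Summing over $k$ yields $(\eta_{\triangulationlambda},\functionlambda)=\langle\eta_{\triangulationlambda},\lambda\rangle$, and negating recovers (\ref{eq:chowpairing}).

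The only genuine subtlety I expect lies in this termwise comparison: one must know that every used lattice point carries $\functionlambda(\omega_k)=\lambda_k$ while every unused one is killed by the GKZ weight, and that the hypothesis of $\triangulationlambda$ being an honest triangulation (rather than a coarser subdivision) is precisely what permits the application of Proposition \ref{prop:chowweightandtoricaubin}. Everything else is a direct substitution into the two propositions already established.
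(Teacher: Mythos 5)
Your proposal is correct and follows essentially the same route as the paper: both sides are computed via Proposition \ref{prop:chowweight} and Proposition \ref{prop:chowweightandtoricaubin}, and the substitution of $\lambda$ for $\functionlambda$ in the pairing is justified exactly as in the paper, by observing that $\functionlambda(\omega_k)=\lambda_k$ whenever $\omega_k\in\Sigma_{\triangulationlambda}(0)$ while $\eta_{\triangulationlambda}(\omega_k)=0$ otherwise. The termwise comparison you flag as the ``only genuine subtlety'' is precisely the one-line observation the paper makes.
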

\begin{proof}
Let $\functionlambda$ be the convex piecewise-linear function on $Q$ defined by (\ref{eq:polytopetc}).
If $\pointheight_k$ is not any vertex of $\polytopetc$, then $\omega_k\not\in\Sigma_{\triangulationlambda}(0)$.
This implies that the $k$-th element of $\eta_{\triangulationlambda}$ is equal to zero.
Hence, we have
$$
	(\eta_{\triangulationlambda},\functionlambda)=\langle \eta_{\triangulationlambda}, \lambda\rangle.
$$ 
The above equality with Proposition \ref{prop:chowweight} and Proposition \ref{prop:chowweightandtoricaubin} implies
$$
	\min\{
		\langle x, \lambda \rangle
		\mid\,
		x \in \chowpolytope
	\}
	=
	\langle \eta_{\triangulationlambda}, \lambda \rangle	
$$
that is equivalent to (\ref{eq:chowpairing}).	
\end{proof}

Now, we prove that for any regular triangulation $T$ of $(Q,A)$, the vertex $\eta_T$ of $\secondarypolytope(X)$ is a vertex of $\chowpolytope$.
Take any regular triangulation $T$ of $(Q,A)$.
Since $\normalcone(\eta)$ has the maximal dimension and
$$
	\bigcup_{\eta} \normalcone(\eta)
	=\latticecharacterreal
$$
where $\eta$ runs through all vertices of $\chowpolytope$, there exists some vertex $\eta$ of $\chowpolytope$ such that
\begin{equation}\label{eq:nonempty}
		\interior{(\conet)}
		\bigcap
		\interior(\normalcone(\eta))
		\neq \emptyset.
\end{equation}
For any $-\lambda\in 	\interior(\conet)\cap\interior(\normalcone(\eta))$, the vertex $\eta$ is the unique maximizer of the restricted linear function $l_{(-\lambda)}:\chowpolytope\to\mathbb{R}$.
Corollary \ref{cor:chowpairing} implies
\begin{equation*}
	\langle \eta, \lambda\rangle 
	=
	\langle\eta_T, \lambda\rangle.
\end{equation*}
The generality of the choice of $\lambda$ implies that $\eta=\eta_T$.
Hence, $\eta_T$ is the vertex $\eta$ of $\chowpolytope$.

Next, we prove the converse.
Take any vertex $\eta$ of $\chowpolytope$. 
As before, there exists a triangulation $T$ so that (\ref{eq:nonempty}) holds because the union
$$
	\bigcup_{T} \conet =\latticecharacterreal
$$
where $T$ runs through all regular triangulations of $(Q,A)$.
Take $-\lambda\in \conet \cap \mathbb{Z}^{N+1}$.
Corollary \ref{cor:chowpairing} implies
$$
	\langle \eta, \lambda\rangle 
	=
	\langle\eta_{T}, \lambda\rangle.
$$
The uniqueness of the maximizer of $l_{\lambda}$ implies that $\eta=\eta_{T}$ because $\eta_T$ is a vertex of $\chowpolytope$ proved above.
Therefore, the proof of Theorem \ref{thm:chowpolytope} is completed.

\subsection{Proof of Theorem \ref{thm:main}}
The proof is totally the same as Subsection \ref{subsec:proofchowpolytope} after replacing Proposition \ref{prop:chowweightandtoricaubin} and Corollary \ref{cor:chowpairing} by the followings respectively.
\begin{proposition}[Proposition 6.3 \cite{os22}]\label{prop:hurwitzweightandtoricdf}
	Let the situation be the same as Proposition $\ref{prop:chowweightandtoricaubin}$.
	Then we have
	\begin{eqnarray}
		\nonumber
		&&
		(n+1)!\volume(Q)\toricdf(g)
		\\
		\label{eq:toricdfhurwitzvector}
		&&
		\qquad
		=
		\big( 
			n\hurwitzdegree\eta_{T}
			-
			(n+1)\chowdegree\xi_T,
			g
		\big).
	\end{eqnarray}
\end{proposition}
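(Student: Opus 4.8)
The plan is to evaluate the two integrals defining $\toricdf(g)$ simplex by simplex, exactly in the spirit of Proposition \ref{prop:chowweightandtoricaubin}, and then to match the resulting coefficients against the degree formulas (\ref{eq:chowdegree}) and (\ref{eq:hurwitzdegree}). Spelling out the definition, $\toricdf(g)=\int_{\partial Q} g\,d\nu - n\frac{\volume(\partial Q)}{\volume(Q)}\int_Q g\,dx$, so it suffices to express each of $\int_Q g\,dx$ and $\int_{\partial Q} g\,d\nu$ through the pairing $(\cdot\,,g)$. The volume term is already handled: Proposition \ref{prop:chowweightandtoricaubin} gives $(n+1)!\int_Q g\,dx=(\eta_T,g)$.

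For the boundary term I would use that $\partial Q$ is tiled by the \emph{massive} $(n-1)$-simplices of $T$, i.e. the $\tau\in\Sigma_T(n-1)$ contained in a facet of $Q$; the non-massive ones lie in the interior and do not contribute. On such a $\tau$ with vertices $\omega_{k_0},\ldots,\omega_{k_{n-1}}$ the function $g$ is affine, so its average over $\tau$ equals the average of the vertex values, and hence $\int_\tau g\,d\nu=\frac{1}{n}\big(\sum_j g(\omega_{k_j})\big)\int_\tau d\nu$. The crucial normalization is $\int_\tau d\nu=\volume(\tau)/(n-1)!$: the measure $d\nu$ was defined by $dx_1\wedge\cdots\wedge dx_n=\pm d\nu\wedge dh$ with $h$ given by a \emph{primitive} normal $u$, which is precisely the condition making $d\nu$ the lattice-normalized measure on each facet, so that it is compatible with the normalized lattice volume $\volume(\tau)$ entering $\eta_{T,n-1}$. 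Summing over all massive simplices and regrouping the contributions by vertex then yields $n!\int_{\partial Q} g\,d\nu=(\eta_{T,n-1},g)$, where I write $\eta_{T,n-1}:=(\eta_{T,n-1}(\omega_0),\ldots,\eta_{T,n-1}(\omega_N))$.

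With these two formulas in hand, I would substitute them into $\toricdf(g)$, multiply through by $(n+1)!\volume(Q)$, and use the relation $\eta_{T,n-1}=n\eta_T-\xi_T$ that comes directly from the definition $\xi_T(\omega_i)=n\eta_{T,n}(\omega_i)-\eta_{T,n-1}(\omega_i)$. A short rearrangement turns the right-hand side into
\[
	n\big[(n+1)\volume(Q)-\volume(\partial Q)\big](\eta_T,g)-(n+1)\volume(Q)(\xi_T,g).
\]
It then remains to identify the two coefficients: by (\ref{eq:chowdegree}) with the standard toric identity $\degree=\volume(Q)$ one has $(n+1)\volume(Q)=(n+1)\chowdegree$, and by (\ref{eq:hurwitzdegree}) with the toric identity $\averagescalar=n\volume(\partial Q)/\volume(Q)$ one has $(n+1)\volume(Q)-\volume(\partial Q)=(n+1)\degree-\degree\averagescalar/n=\hurwitzdegree$. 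Substituting these gives exactly (\ref{eq:toricdfhurwitzvector}).

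The step I expect to be the main obstacle is the normalization of the boundary measure, namely verifying $\int_\tau d\nu=\volume(\tau)/(n-1)!$ for a massive simplex $\tau$ inside a facet: one must check that the lattice measure induced on the facet by the primitive-normal convention agrees with the $(n-1)$-dimensional normalized volume $\volume$ used to define $\eta_{T,n-1}$, and that only the massive simplices contribute to $\int_{\partial Q} g\,d\nu$. The remaining algebra, together with the identification of the coefficients with $\chowdegree$ and $\hurwitzdegree$, is routine given the degree formulas already recorded above.
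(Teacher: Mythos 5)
Your proposal is correct and follows essentially the same route as the paper's proof: both rest on the two pairing identities $(\eta_T,g)=(n+1)!\int_Q g\,dx$ and $(\eta_{T,n-1},g)=n!\int_{\partial Q}g\,d\nu$ together with the toric identifications $\chowdegree=\volume(Q)$ and $\hurwitzdegree=(n+1)\volume(Q)-\volume(\partial Q)$, differing only in that you expand the left-hand side while the paper expands the right-hand side. The boundary-measure normalization you flag as the main obstacle is exactly the point the paper passes over with a bare ``similarly,'' and your justification of it (massive simplices tile $\partial Q$, and the primitive-normal convention makes $d\nu$ the lattice-normalized facet measure) is sound.
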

\begin{proof}
Although the proof is the same as the proof of  Proposition 6.3 \cite{os22}, we write it for the readers.
Recall that
$$
	\chowdegree= \degree = \volume(Q),
$$
and
\begin{eqnarray*}
		\hurwitzdegree
	&=&
		(n+1)\degree - \frac{\degree}{n}\averagescalar
	\\
	&=&
		(n+1)\volume(Q) 
		-
		\frac{\volume(Q)}{n} 
		\cdot
		\bigg(
			n \frac{\volume(\partial Q)}{\volume(Q)}
		\bigg)
	\\
	&=&
		(n+1)\volume(Q)-\volume(\partial Q).
\end{eqnarray*}
The latter follows from the formula (5.53) in \cite{paul12}.
As before, we have
$$
	(\eta_T, g)
	= (n+1)! \int_Q g dx.
$$
Similarly, we have
$$
	(\eta_{T,n-1}, g)
	= n! \int_{\partial Q} g d\nu.
$$
The first term in the right hand of (\ref{eq:toricdfhurwitzvector}) is equal to
\begin{eqnarray*}
		n\hurwitzdegree(\eta_T, g)
	&=&
		n(n+1)(n+1)!
		\volume(Q)
		\int_Q g dx
	\\
	&&
		\qquad
		-n(n+1)!
		\volume(\partial Q)
		\int_Q g dx.
\end{eqnarray*} 
The second term in the right hand of (\ref{eq:toricdfhurwitzvector}) is equal to
\begin{eqnarray*}
		(n+1)\chowdegree(\xi_T,g)
	&=&
		(n+1)\volume(Q)(n\eta_T,g)
	\\
	&&
		\qquad
		-
		(n+1)\volume(Q)(\eta_{T,n-1},g)
	\\
	&=&
		n(n+1)(n+1)!
		\volume(Q)
		\int_Q g dx
	\\
	&&
		\qquad
		- (n+1)!\volume(Q)\int_{\partial Q} g d\mu.
\end{eqnarray*} 
Hence, the right hand of (\ref{eq:toricdfhurwitzvector})  is equal to
\begin{eqnarray*}
		&&
			(n+1)!\volume(Q)\int_{\partial Q} g d\mu
			- n(n+1)!\volume(\partial Q)\int_Q g dx
		\\
		&&
			\qquad
			=
			(n+1)!\volume(Q)
			\bigg(
				\int_{\partial Q}gd\mu
				-
				n \frac{\volume(\partial Q)}{\volume(Q)}
				\int_Q g dx
			\bigg)
		\\
		&&
			\qquad
			=
			(n+1)!\volume(Q)\toricdf(g).
\end{eqnarray*}
The proof is completed.
\end{proof}

\begin{corollary}\label{cor:hurwitzpairing}
	Let the situation be same as Corollary \ref{cor:chowpairing}.
	Then we have
	\begin{equation*}\label{eq:hurwitzpairing}
		\max\{
			\langle x, -\lambda \rangle
			\mid\,
			x \in \hurwitzpolytope
		\}
		=
		\langle \xi_{\triangulationlambda}, -\lambda \rangle.
	\end{equation*}
\end{corollary}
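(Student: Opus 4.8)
The plan is to mirror the proof of Corollary \ref{cor:chowpairing}. The new feature is that $\paulfutaki(\lambda)$ involves the Hurwitz polytope and the Chow polytope simultaneously (Theorem \ref{thm:paulfutaki}), so I will extract the Hurwitz statement by equating the two available formulas for $\paulfutaki(\lambda)$ and cancelling the Chow contribution using the result just established. Throughout I work under the standing hypothesis of Corollary \ref{cor:chowpairing}, namely that the vertical projection of $\polytopetc$ induces a genuine triangulation $\triangulationlambda$ of $(Q,A)$.

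First I would record the pairing identity $(\xi_{\triangulationlambda}, \functionlambda) = \langle \xi_{\triangulationlambda}, \lambda\rangle$, by exactly the argument used for $\eta_{\triangulationlambda}$ in Corollary \ref{cor:chowpairing}. If $\pointheight_k$ is not a vertex of $\polytopetc$, then $\omega_k \notin \Sigma_{\triangulationlambda}(0)$; since $\xi_{\triangulationlambda}(\omega_k) = n\,\eta_{\triangulationlambda,n}(\omega_k) - \eta_{\triangulationlambda,n-1}(\omega_k)$ is built from sums over simplices of $\triangulationlambda$ having $\omega_k$ as a vertex, both terms are empty sums and $\xi_{\triangulationlambda}(\omega_k) = 0$. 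For the remaining indices $\functionlambda(\omega_k) = \lambda_k$, so only the vertex contributions survive and the pairing with $\functionlambda$ collapses to the linear pairing with $\lambda$. The same reasoning gives $(\eta_{\triangulationlambda}, \functionlambda) = \langle \eta_{\triangulationlambda}, \lambda\rangle$.

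Next I would combine Corollary \ref{cor:hurwitzweight} with Proposition \ref{prop:hurwitzweightandtoricdf}, evaluated at $g = \functionlambda$ and $T = \triangulationlambda$. Corollary \ref{cor:hurwitzweight} writes $\paulfutaki(\lambda)$ as a multiple of $\toricdf(\functionlambda)$, while Proposition \ref{prop:hurwitzweightandtoricdf} rewrites $(n+1)!\,\volume(Q)\,\toricdf(\functionlambda)$ as $n\hurwitzdegree(\eta_{\triangulationlambda}, \functionlambda) - (n+1)\chowdegree(\xi_{\triangulationlambda}, \functionlambda)$. Substituting the pairing identities from the previous step and simplifying the factorials using $V = \volume(Q) = \chowdegree = \degree$, I expect to arrive at
\[
	\paulfutaki(\lambda)
	=
	\frac{(n+1)\chowdegree}{n}\langle \xi_{\triangulationlambda}, \lambda\rangle
	- \hurwitzdegree\,\langle \eta_{\triangulationlambda}, \lambda\rangle.
\]

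Finally I would compare this with the defining formula for $\paulfutaki(\lambda)$ in Theorem \ref{thm:paulfutaki}, in which the Chow minimum is replaced by $\langle \eta_{\triangulationlambda}, \lambda\rangle$ via Corollary \ref{cor:chowpairing}; its $\eta_{\triangulationlambda}$-coefficient is then exactly $-\hurwitzdegree$, so the two $\eta_{\triangulationlambda}$-terms cancel and leave
\[
	\frac{(n+1)\chowdegree}{n}\min\{\langle x, \lambda\rangle \mid x\in \hurwitzpolytope\}
	=
	\frac{(n+1)\chowdegree}{n}\langle \xi_{\triangulationlambda}, \lambda\rangle.
\]
Dividing by the positive factor $(n+1)\chowdegree/n$ yields $\min\{\langle x,\lambda\rangle \mid x\in\hurwitzpolytope\} = \langle\xi_{\triangulationlambda},\lambda\rangle$, which is the asserted equality after negating $\lambda$. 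I expect the main obstacle to be the bookkeeping in the middle step: one must track the constants $V$, $\volume(Q)$, $\chowdegree$, $\hurwitzdegree$, and the factorials with care, and in particular verify $V = \volume(Q)$ so that the coefficient of $\langle\eta_{\triangulationlambda},\lambda\rangle$ matches $\hurwitzdegree$ precisely; that exact match is what makes the Chow terms cancel. Once the constants are aligned the final comparison is immediate.
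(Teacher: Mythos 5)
Your proposal is correct and follows essentially the same route as the paper: both arguments combine Proposition \ref{prop:hurwitzweightandtoricdf} (at $g=\functionlambda$, $T=\triangulationlambda$) with Corollary \ref{cor:hurwitzweight}, the formula (\ref{eq:paulfutaki}) for $\paulfutaki(\lambda)$, the identity $V=\volume(Q)=\chowdegree$, and Corollary \ref{cor:chowpairing} to cancel the $\langle\eta_{\triangulationlambda},\lambda\rangle$ terms. Your reorganization --- solving for $\paulfutaki(\lambda)$ in terms of $\xi_{\triangulationlambda}$ and $\eta_{\triangulationlambda}$ and then equating with (\ref{eq:paulfutaki}), rather than solving for $\langle\xi_{\triangulationlambda},\lambda\rangle$ directly --- is only a cosmetic difference.
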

\begin{proof}
Recall 
$$
	\langle \eta_{\triangulationlambda}, \lambda \rangle
	=
	(\eta_{\triangulationlambda}, g_\lambda),
	\quad
	\langle \xi_{\triangulationlambda}, \lambda \rangle
	=
	(\xi_{\triangulationlambda}, g_{\lambda}).
$$
Proposition \ref{prop:hurwitzweightandtoricdf} implies that
\begin{eqnarray*}
		\langle \xi_{\triangulationlambda}, \lambda \rangle
	&=&
		\bigg(
			\frac{n}{n+1}
		\bigg)
		\bigg(
			\frac{\hurwitzdegree}{\chowdegree} 
		\bigg)
		(\eta_{\triangulationlambda}, g_\lambda)
		-
		\frac{n!\volume(Q)}{\chowdegree}
		\toricdf(g_\lambda)
	\\
	&=&
		\bigg(
			\frac{n}{n+1}
		\bigg)
		\bigg(
			\frac{\hurwitzdegree}{\chowdegree} 
		\bigg)
		(\eta_{\triangulationlambda}, g_\lambda)
		-
		n!\toricdf(g_\lambda).
\end{eqnarray*}
Theorem \ref{thm:chowpolytope}, Corollary \ref{cor:hurwitzweight} and (\ref{eq:paulfutaki}) imply that
\begin{eqnarray*}
		-n!\toricdf(g_\lambda)
	&=&
		\frac{1}{V}
		\bigg(
			\frac{n}{n+1}
		\bigg)
		\paulfutaki(\lambda)
	\\
	&=&
		\min\{
			\langle x, \lambda \rangle
			\mid\,
			x \in \hurwitzpolytope
		\}		
	\\
	&&
		\,
		-
		\bigg(
			\frac{n}{n+1}
		\bigg)
		\bigg(
			\frac{\hurwitzdegree)}{\chowdegree)} 
		\bigg)
		\min\{
			\langle x, \lambda \rangle
			\mid\,
			x \in \chowpolytope
		\}	
	\\
	&=&
		\min\{
			\langle x, \lambda \rangle
			\mid\,
			x \in \hurwitzpolytope
		\}		\\
	&&
		\qquad
		-
		\bigg(
			\frac{n}{n+1}
		\bigg)
		\bigg(
			\frac{\hurwitzdegree)}{\chowdegree)} 
		\bigg)
		(\eta_{\triangulationlambda}, \functionlambda).
\end{eqnarray*}
where
$
	V= \volume(Q).
$
Hence, we have
$$
	\langle \xi_{\triangulationlambda}, \lambda \rangle
	=
	\min\{
			\langle x, \lambda \rangle
			\mid\,
			x \in \hurwitzpolytope
	\}.
$$
The proof is completed.
\end{proof}
Replacing Corollary \ref{cor:chowpairing} (resp. $\chowpolytope$ in the definition of $\normalcone(\eta)$) by Corollary \ref{cor:hurwitzpairing} (resp. $\hurwitzpolytope$), the same argument of the proof of Theorem \ref{thm:chowpolytope} completes the proof of Theorem \ref{thm:main}.

\end{document}